\documentclass[10pt,a4paper]{article}

\usepackage[a4paper,left=2.5cm,right=2.5cm,
top=2.5cm,bottom=2.5cm]{geometry}

\usepackage{amsmath,amssymb,amsfonts,amsthm}
\usepackage{mathrsfs}
\usepackage{graphicx}
\usepackage{booktabs}
\usepackage{multirow}
\usepackage{xcolor}
\usepackage{enumerate}
\usepackage{appendix}
\usepackage{hyperref}

\newtheorem{theorem}{Theorem}[section]
\newtheorem{proposition}[theorem]{Proposition}
\newtheorem{lemma}[theorem]{Lemma}

\theoremstyle{definition}
\newtheorem{definition}[theorem]{Definition}

\theoremstyle{remark}

\title{\bfseries
Reverse Ricci--Curvature Bounds for Riemannian Submersions and Riemannian Maps
}

\author{
Ravindra Singh\\[2mm]
\small Department of Mathematics, Banaras Hindu University\\
\small Varanasi 221005, India\\
\small Email: \texttt{khandelrs@bhu.ac.in}\\
\small ORCID: 0009-0009-1270-3831
}

\date{}

\begin{document}
\maketitle

%------------------------------------------------
%    
%	\subjclass{Primary 53B20; Secondary 53B35, 53C15, 53D15.}
%	
%	\keywords{Riemannian manifolds, space forms, sectional and scalar curvatures, Riemannian submersions.}

\begin{abstract}
\noindent In this paper, we establish, for the first time, upper bounds of
the Ricci--curvature for Riemannian submersions along the vertical
distribution as well as along both the vertical and horizontal
distributions. We derive their general forms and provide precise geometric
characterisations of the equality cases. Furthermore, we obtain lower bounds
of the Ricci--curvature for Riemannian maps, together with their general
formulations and complete geometric characterisations of the equality cases.
As applications, we apply these results to Riemannian submersions from real
and complex space forms onto Riemannian manifolds, and to Riemannian maps
from Riemannian manifolds into real and complex space forms.
\end{abstract}

\section{Introduction}

In submanifold or immersion theory, one of the most fundamental problems is
to establish simple and sharp relationships between the intrinsic and
extrinsic curvature invariants of a Riemannian submanifold \cite{Chen_2011}.
Toward this, a fundamental relation was derived between the Ricci curvature
(intrinsic invariant) and the squared mean curvature (extrinsic invariant)
for submanifolds of real space forms \cite{Chen_1999}. Further, such a
relation (known as the {\it Chen-Ricci inequality}) has also been
established for submanifolds of various ambient space forms (see \cite%
{Chen_Blaga}).

We know that a Riemannian submersion is the dual concept of isometric
immersion and has various applications \cite{Falcitelli_2004, Sahin_book}.
The concept of a Riemannian map generalizes the concepts of isometric
immersion and Riemannian submersion, providing frameworks for quantum models
and for comparing the geometric properties of two arbitrary Riemannian
manifolds \cite{Fischer_1992, Sahin_book}. Since Riemannian submersions and
Riemannian maps help to establish relations between curvature invariants of
submanifolds of the source and target manifolds, the Chen-Ricci inequalities
have also been studied by many authors in the context of these smooth
mappings. For geometrically structured Riemannian submersions, they are
explored from various space forms \cite%
{Gulbahar_Meric_Kilic,Gunduzalp_Polat_MMN,Gunduzalp_Polat_Filomat,Akyol_Demir_Poyraz_Vilcu,Aytimur_Ozgur,Akyol_Poyraz,Polat_2024,Aquib_Aldayel_Iqbal_Khan,Aytimur_2023,Poyraz_Akyol,Aquib_2025}%
. In addition, for Riemannian maps with real and complex space forms as the
target spaces, Lee et al. \cite{LLSV_2022} obtained Chen-Ricci and improved
Chen-Ricci inequalities (using Deng's approach).

As observed in the previously established Chen-Ricci inequalities for
Riemannian submersions, lower bounds for the Ricci-curvature were obtained
along the vertical distribution as well as along both the horizontal and
vertical distributions. In the present paper, we derive corresponding upper
bounds for the Ricci-curvature along the vertical distribution and along
both horizontal and vertical distributions, and we investigate the geometric
characterisations of the equality cases. Furthermore, it is known that, for
Riemannian maps, upper bounds for the Ricci curvature have been obtained.
Motivated by this fact, we establish in this paper lower bounds for the
Ricci-curvature in the setting of Riemannian maps. Consequently, we derive
general optimal inequalities for both Riemannian maps and Riemannian
submersions that unify these results, namely, upper bounds of Ricci
curvature for Riemannian submersions and lower bounds of Ricci curvature for
Riemannian maps, together with a detailed discussion of their equality
cases. As applications, these general inequalities are applied to Riemannian
submersions from real and complex space forms, as well as to Riemannian maps
into real and complex space forms.

This paper is organised into six sections. In Section~$2$, we present the
fundamental tools, including basic definitions, lemmas, and notations
related to Riemannian submersions. Sections~$3$ and~$4$ are devoted to
deriving the general forms of the upper bounds of the Ricci--curvature for
Riemannian submersions along the vertical distribution and along both the
vertical and horizontal distributions, respectively. In these sections, we
also provide geometric characterisations of the equality cases. In Section~$5
$, we apply the obtained inequalities to Riemannian submersions from real
space forms onto Riemannian manifolds, as well as from complex space forms
onto Riemannian manifolds. Finally, in Section~6, we give the necessary
fundamental tools for Riemannian maps and derive the general form of the
lower bound of the Ricci curvature. We also characterise the equality cases
geometrically. As an application, we apply this general form to Riemannian
maps to real and complex space forms.

\section{Definitions}

\begin{definition}
{\rm \cite[Chen 1981]{Chen_1981}} A Riemannian manifold $\left( M,g\right) $
with constant sectional curvature $c$ is called a real space form, and its
Riemann curvature tensor field $R^{M}$ is given by 
\begin{equation}
R^{M}({\cal Z}_{1},{\cal Z}_{2}){\cal Z}_{3}=c\{g({\cal Z}_{2},{\cal Z}_{3})%
{\cal Z}_{1}-g({\cal Z}_{1},{\cal Z}_{3}){\cal Z}_{2}\}.  \label{eq-RSF}
\end{equation}%
for all vector fields ${\cal Z}_{1},{\cal Z}_{2},{\cal Z}_{3}\in \Gamma (TM)$%
.
\end{definition}

\begin{definition}
{\rm \cite[Ogiue 1972]{Ogiue_1972}} Let $M$ be an almost Hermitian manifold
with an almost Hermitian structure $\left( J,g\right) $. Then $M$ becomes a
Kaehler manifold if $\nabla J=0$. A kaehler manifold with constant
holomorphic sectional curvature $c$ is called complex space form $M\left(
c\right) $; and its Riemann curvature tensor field is given by 
\begin{align}
R^{M}({\cal Z}_{1},{\cal Z}_{2}){\cal Z}_{3}& =\frac{c}{4}\{g({\cal Z}_{2},%
{\cal Z}_{3}){\cal Z}_{1}-g({\cal Z}_{1},{\cal Z}_{3}){\cal Z}_{2}\} 
\nonumber \\
& +\frac{c}{4}\{g({\cal Z}_{1},J{\cal Z}_{3})J{\cal Z}_{2}-g({\cal Z}_{2},J%
{\cal Z}_{3})J{\cal Z}_{1}+2g({\cal Z}_{1},J{\cal Z}_{2})J{\cal Z}_{3}\}.
\label{eq-GCSF}
\end{align}%
\noindent for all ${\cal Z}_{1},{\cal Z}_{2},{\cal Z}_{3}\in \Gamma (T{M})$,
Moreover, for any ${\cal Z}\in \Gamma (TM)$, we write 
\begin{equation}
J{\cal Z}=P{\cal Z}+Q{\cal Z},  \label{decompose_GCSF_RM}
\end{equation}%
where $P{\cal Z}\in \Gamma ({\rm \ker }F_{\ast })^{\perp }$, $Q{\cal Z}\in
\Gamma ({\rm \ker }F_{\ast })$ such that 
\[
\left\Vert Q\right\Vert ^{2}=\sum\limits_{i=1}^{\ell }\Vert QV_{i}\Vert
^{2}=\sum\limits_{i,j=1}^{r}\left( g_{2}(QV_{i},V_{j})\right) ^{2},\quad
\left\Vert P\right\Vert ^{2}=\sum\limits_{i=1}^{r}\Vert Ph_{i}\Vert
^{2}=\sum\limits_{i,j=1}^{s}\left( g_{2}(Ph_{i},h_{j})\right) ^{2}, 
\]%
where $\left\{ V_{1},\ldots ,V_{\ell }\right\} $ and $\left\{ h_{1},\ldots
,h_{r}\right\} $ are orthonormal bases of $\ker F_{\ast }$ and $\left( \ker
F_{\ast }\right) ^{\perp }$, respectively.
\end{definition}

\begin{lemma} \cite[cf.~Hineva 2008, Lemma 3.1]{Hineva}
\label{Hineva}Suppose the $A=\left( a_{ij}\right) $ is any symmetric $\left(
n\times n\right) $-matrix $\left( n\geq 2\right) $ such that ${\rm trace}%
\left( A\right) =a$ and the Frobenius norm $\left\Vert A\right\Vert =b$. Then%
\begin{equation}
a_{11}\sum_{i=1}^{n}a_{ii}-\sum_{i=1}^{n}\left( a_{1i}\right) ^{2}\geq \frac{%
n-1}{n^{2}}\left( 2a^{2}-nb-\left( n-2\right) \left\vert a\right\vert \sqrt{%
\frac{nb-a^{2}}{n-1}}\right)  \label{eq-T-1}
\end{equation}%
The equality case in (\ref{eq-T-1}) is true if and only if $A$ is of the
form:%
\[
A=\left( 
\begin{array}{ccccc}
a_{1} & 0 & \cdots & 0 & 0 \\ 
0 & a_{2} & \cdots & 0 & 0 \\ 
\vdots & \vdots & \ddots & \vdots & 0 \\ 
0 & 0 & \cdots & a_{2} & 0 \\ 
0 & 0 & \cdots & 0 & a_{2}%
\end{array}%
\right) 
\]%
where%
\[
a_{1}=\frac{a}{n}\bar{+}\frac{n-1}{n}\sqrt{\frac{nb-a^{2}}{n-1}},\ a_{2}=%
\frac{a}{n}+\frac{1}{n}\sqrt{\frac{nb-a^{2}}{n-1}} 
\]
\end{lemma}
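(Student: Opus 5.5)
The plan is to diagonalize $A$ and reduce the statement to a one-variable concave minimization. Throughout I read $b$ as the squared Frobenius norm, $b=\sum_{i,j}a_{ij}^{2}$, since this is the only reading under which the quantity $nb-a^{2}$ in (\ref{eq-T-1}) is consistent. Set
\[
L:=a_{11}\sum_{i=1}^{n}a_{ii}-\sum_{i=1}^{n}(a_{1i})^{2}.
\]
Let $e_{1}$ be the first standard basis vector. Then $a_{11}=\langle Ae_{1},e_{1}\rangle$ and, by symmetry of $A$, $\sum_{i}a_{1i}^{2}=|Ae_{1}|^{2}$. Take an orthonormal eigenbasis $u_{1},\dots,u_{n}$ with eigenvalues $\lambda_{1},\dots,\lambda_{n}$, and write $e_{1}=\sum_{k}x_{k}u_{k}$ with $\sum_{k}x_{k}^{2}=1$. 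This gives
\[
L=a\sum_{k}x_{k}^{2}\lambda_{k}-\sum_{k}x_{k}^{2}\lambda_{k}^{2}=\sum_{k}x_{k}^{2}\,\lambda_{k}(a-\lambda_{k}).
\]
This is linear in the weights $x_{k}^{2}$, which range over the simplex. Hence $L\geq\min_{k}f(\lambda_{k})$, where $f(t)=t(a-t)$.

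The second step bounds $f(\lambda_{k})$ for a single eigenvalue, say $\lambda=\lambda_{1}$, using only the invariants $a=\sum\lambda_{k}$ and $b=\sum\lambda_{k}^{2}$. The remaining $n-1$ eigenvalues have sum $a-\lambda$ and sum of squares $b-\lambda^{2}$. By Cauchy--Schwarz, $b-\lambda^{2}\geq(a-\lambda)^{2}/(n-1)$. This rearranges to $n\lambda^{2}-2a\lambda+a^{2}\leq(n-1)b$. With $s=\sqrt{(nb-a^{2})/(n-1)}$, it means $\lambda$ lies in the interval
\[
\Big[\tfrac{a}{n}-\tfrac{n-1}{n}s,\ \tfrac{a}{n}+\tfrac{n-1}{n}s\Big].
\]
Since $f$ is concave, its minimum on this interval is attained at an endpoint. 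At $\lambda=\frac{a}{n}\pm\frac{n-1}{n}s$ we have $a-\lambda=\frac{n-1}{n}(a\mp s)$. Using $(n-1)s^{2}=nb-a^{2}$, one computes
\[
f(\lambda)=\frac{n-1}{n^{2}}\big(a\pm(n-1)s\big)(a\mp s)=\frac{n-1}{n^{2}}\big(2a^{2}-nb\pm(n-2)as\big).
\]
The smaller of the two values is the one with $-(n-2)|a|s$, which is exactly the right-hand side of (\ref{eq-T-1}).

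For the equality case, each inequality used must be tight.
\begin{enumerate}[(i)]
\item The weights must be supported on eigenvectors whose eigenvalue attains the minimum. I would argue that this forces $e_{1}$ itself to be an eigenvector, possibly after noting that the minimizing eigenvalue is simple when $s\neq0$. This gives $a_{1i}=0$ for $i\geq2$ and $a_{11}=a_{1}$.
\item Cauchy--Schwarz must be an equality. Then the other $n-1$ eigenvalues are all equal to $(a-a_{1})/(n-1)=\frac{a}{n}\pm\frac{1}{n}s$. So the complementary block is the scalar matrix $a_{2}I_{n-1}$, which is diagonal in any basis.
\end{enumerate}
The sign coupling $\mp$ versus $\pm$ between $a_{1}$ and $a_{2}$ is then dictated by which endpoint realizes the minimum. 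When $n=2$, or when $a=0$ or $s=0$, both endpoints give the same value and both signs occur. The converse is a direct substitution.

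I expect the equality analysis to be the main obstacle. The delicate point is ruling out degenerate configurations, such as $e_{1}$ spread over two eigenvectors with equal minimizing value. In that situation the Cauchy--Schwarz equality already forces all other eigenvalues to coincide, and I would handle it using that constraint.
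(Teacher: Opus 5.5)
Your proof of the inequality itself is correct and self-contained. The paper only cites Hineva and gives no proof to compare with. The identity $L=\sum_k x_k^2 f(\lambda_k)$ with $f(t)=t(a-t)$, the Cauchy--Schwarz confinement of every eigenvalue to $I=\bigl[\frac an-\frac{n-1}{n}s,\ \frac an+\frac{n-1}{n}s\bigr]$, and the endpoint values $\frac{n-1}{n^2}\bigl(2a^2-nb\pm(n-2)as\bigr)$ all check out. You are also right that $b$ must be the squared Frobenius norm.

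The gap is in the equality case for $n=2$, and it cannot be repaired, because the stated characterization is false there. For $n=2$ one has $a^2-b=2(a_{11}a_{22}-a_{12}^2)$, so both sides of (\ref{eq-T-1}) equal $a_{11}a_{22}-a_{12}^2$ identically. Hence every symmetric $2\times2$ matrix gives equality. For example, $A=\begin{pmatrix}0&1\\1&0\end{pmatrix}$ gives $L=-1=\frac14(2a^2-2b)$, yet it is not diagonal.

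In your framework, the two eigenvalues are $\frac a2\pm\frac s2$, which are both endpoints of $I$. Since the $(n-2)$-term vanishes, both are minimizers, so $e_1$ may be spread over both eigenvectors. This is exactly the degenerate configuration you flagged. Your remedy does not exclude it: ``the other eigenvalues coincide'' is vacuous when there is only one other eigenvalue. Your claim that for $n=2$ merely ``both signs occur'' is therefore wrong, and step (i) fails. The ``only if'' part must be restricted to $n\ge3$. The same defect affects the equality discussion of Theorem~\ref{Theorem 1} when $\ell=2$.

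For $n\ge3$ your outline goes through, and the degenerate case closes as follows. Suppose $s>0$ and some $\lambda_k$ with $x_k\ne0$ is an endpoint $\frac an\pm\frac{n-1}{n}s$. Cauchy--Schwarz equality forces the remaining eigenvalues to equal $\frac an\mp\frac sn$, which lies strictly inside $I$ when $n\ge3$. By strict concavity, $f$ there strictly exceeds the right-hand side of (\ref{eq-T-1}). Hence all other weights vanish, $\lambda_k$ is simple, and $e_1$ is an eigenvector.

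You should also state the sign rule $a_1=\frac an-\mathrm{sgn}(a)\frac{n-1}{n}s$ explicitly, with either sign allowed if $a=0$. With the opposite sign the converse fails: for $n=3$, $\mathrm{diag}(3,0,0)$ has that shape but gives $L=0>-4$.
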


\subsection{Background}

Let $(N_{1},g_{1})$ and $(N_{2},g_{2})$ be two Riemannian manifolds with $%
\dim N_{1}=n_{1}$, and $\dim N_{2}=n_{2}$. A surjective smooth map 
\[
F:(N_{1},g_{1})\rightarrow (N_{2},g_{2}) 
\]%
is called a {\em Riemannian submersion} if its differential 
\[
F_{\ast p}:T_{p}N_{1}\rightarrow T_{F(p)}N_{2} 
\]%
is surjective for all $p\in M_{1}$ and preserves the lengths of horizontal
vectors. Vectors tangent to the fibers $F^{-1}(q)$ are called {\em vertical}%
, while vectors orthogonal to the fibers are called {\em horizontal}.
Consequently, the tangent bundle of $M_{1}$ admits the orthogonal
decomposition 
\[
TN_{1}={\cal V}\oplus {\cal H}, 
\]%
where ${\cal V}=\ker F_{\ast }$ is the vertical distribution and ${\cal H}%
=(\ker F_{\ast })^{\perp }$ is the horizontal distribution.

Equivalently, for a Riemannian submersion 
\[
F:(N_{1},g_{1})\rightarrow (N_{2},g_{2}), 
\]%
the map $F_{\ast p}$ restricts to a linear isometry from $(\ker F_{\ast
p})^{\perp }$ onto ${\rm range}F_{\ast p}$, and at each point $p\in N_{1}$, 
\[
T_{p}N_{1}=(\ker F_{\ast p})\oplus (\ker F_{\ast p})^{\perp }. 
\]

\subsubsection*{O'Neill tensors}

For a Riemannian submersion, O'Neill \cite{Neill_1996} defined two $(1,2)$%
-type tensor fields ${T}$ and ${A}$ that satisfy 
\[
{A}_{Y_{1}}Y_{2}=-A_{Y_{2}}Y_{1}, 
\]%
\[
{T}_{U_{1}}U_{2}={T}_{U_{2}}U_{1}, 
\]%
\[
g_{1}\left( {T}_{U_{1}}Y_{2},Y_{3}\right) =-g_{1}\left( {T}%
_{U_{1}}Y_{3},Y_{2}\right) , 
\]%
and 
\[
g_{1}\left( {A}_{Y_{1}}Y_{2},Y_{3}\right) =-g_{1}\left( {A}%
_{Y_{1}}Y_{3},Y_{2}\right) , 
\]%
where $U_{1},U_{2}\in \chi \left( \ker F_{\ast }\right) $ and $%
Y_{1},Y_{2},Y_{3}\in \chi (\left( \ker F_{\ast }\right) ^{\perp })$.
Moreover, let $\left\{ \vee _{1},\ldots ,\vee _{\ell }\right\} $ be an
orthonormal basis of the vertical distribution $(\ker F_{\ast })$. Then the 
{\it mean curvature vector field }$H${\it \ of the fibers} of $F$ is defined
as \cite{Falcitelli_2004},%
\begin{equation}
H=\frac{1}{\ell }\sum\limits_{i=1}^{\ell }T_{\vee _{i}}\vee _{i},\quad
\left\Vert H\right\Vert ^{2}=\frac{1}{\ell ^{2}}\sum_{\alpha =1}^{r}\left(
\sum_{j=1}^{\ell}T_{jj}^{{\cal H}^{\alpha }}\right) ^{2}.  \label{mean_curv}
\end{equation}

\subsubsection*{Relations between Riemannian curvature tensors}

Let $R^{N_{1}}$, $R^{N_{2}}$, $R^{\ker F_{\ast }}$, and $R^{\left( \ker
F_{\ast }\right) ^{\perp }}$ denote the Riemannian curvature tensors
corresponding to $N_{1}$, $N_{2}$, $\ker F_{\ast }$, and $\left( \ker
F_{\ast }\right) ^{\perp }$, respectively. Then we have \cite{Neill_1996,
Falcitelli_2004} 
\begin{eqnarray}
R^{N_{1}}\left( U_{1},U_{2},U_{3},U_{4}\right) &=&R^{\ker F_{\ast }}\left(
U_{1},U_{2},U_{3},U_{4}\right) +g_{1}\left( {T}_{U_{1}}U_{4},{T}%
_{U_{2}}U_{3}\right)  \nonumber \\
&&-g_{1}\left( {T}_{U_{2}}U_{4},{T}_{U_{1}}U_{3}\right) ,  \label{eq-(2.3)}
\end{eqnarray}%
\begin{eqnarray}
R^{N_{1}}\left( Y_{1},Y_{2},Y_{3},Y_{4}\right) &=&R^{\left( \ker F_{\ast
}\right) ^{\perp }}\left( Y_{1},Y_{2},Y_{3},Y_{4}\right) -2g_{1}\left( {A}%
_{Y_{1}}Y_{2},{A}_{Y_{3}}Y_{4}\right)  \nonumber \\
&&+g_{1}\left( {A}_{Y_{2}}Y_{3},{A}_{Y_{1}}Y_{4}\right) -g_{1}\left( {A}%
_{Y_{1}}Y_{3},{A}_{Y_{2}}Y_{4}\right) ,  \label{eq-(2.4)}
\end{eqnarray}%
and 
\begin{eqnarray}
R^{N_{1}}\left( Y_{1},U_{1},Y_{2},U_{2}\right) &=&g_{1}\left( \left( \nabla
_{Y_{1}}^{1}{T}\right) \left( U_{1},U_{2}\right) ,Y_{2}\right) +g_{1}\left(
\left( \nabla _{U_{1}}^{1}{A}\right) \left( Y_{1},Y_{2}\right) ,U_{2}\right)
\nonumber \\
&&-g_{1}\left( {T}_{U_{1}}Y_{1},{T}_{U_{2}}Y_{2}\right) +g_{1}\left( {A}%
_{Y_{2}}U_{2},{A}_{Y_{1}}U_{1}\right) ,  \label{eq-(2.5)}
\end{eqnarray}%
for all $Y_{1},Y_{2},Y_{3},Y_{4}\in \chi \left( \left( \ker F_{\ast }\right)
^{\perp }\right) $ and $U_{1},U_{2},U_{3},U_{4}\in \chi \left( \ker F_{\ast
}\right) $. Here, $\nabla ^{1}$ is the Levi-Civita connection with respect
to the metric $g_{1}$.

\subsubsection*{Some notations}

Let $(\ker F_{\ast })={\rm span}\{\vee _{1},\ldots ,\vee _{\ell }\}$ and $%
\left( \ker F_{\ast }\right) ^{\perp }={\rm span}\{h_{1},\ldots ,h_{r}\}$.
We define some notation as 
\begin{equation}
{T}_{ij}^{{\cal H}^{\alpha }}:=g_{1}({T}_{\vee _{i}}\vee _{j},h_{t}),\quad
1\leq i,j\leq \ell ,\ 1\leq \alpha \leq r,  \label{eq-(3.1)}
\end{equation}%
\begin{equation}
A_{ij}^{\alpha }:=g_{1}({A}_{h_{i}}h_{j},\vee _{\alpha }),\quad 1\leq
i,j\leq r,\ 1\leq \alpha \leq \ell ,  \label{eq-(3.2)}
\end{equation}%
\begin{equation}
\delta (N):=\sum\limits_{i=1}^{r}\sum\limits_{j=1}^{\ell }\left( \left(
\nabla _{h_{i}}^{1}{T}\right) _{\vee _{j}}\vee _{j},h_{i}\right) ,
\label{eq-(3.3)}
\end{equation}%
\begin{equation}
\left\Vert T^{{\cal V}}\right\Vert
^{2}:=\sum\limits_{i=1}^{r}\sum\limits_{j=1}^{\ell }g_{1}\left( T_{\vee
_{j}}h_{i},T_{\vee _{j}}h_{i}\right) ,\quad \left\Vert T^{{\cal H}%
}\right\Vert ^{2}:=\sum\limits_{i,j=1}^{\ell }g_{1}\left(
T_{V_{i}}V_{j},T_{V_{i}}V_{j}\right) ,  \label{TVert}
\end{equation}%
and 
\begin{equation}
\left\Vert A^{{\cal H}}\right\Vert
^{2}:=\sum\limits_{i=1}^{r}\sum\limits_{j=1}^{\ell }g_{1}\left(
A_{h_{i}}\vee _{j},A_{h_{i}}\vee _{j}\right) ,\quad \left\Vert A^{{\cal V}%
}\right\Vert ^{2}:=\sum\limits_{i,j=1}^{r}g_{1}\left(
A_{h_{i}}h_{j},A_{h_{i}}h_{j}\right)  \label{AHor}
\end{equation}%
\begin{equation}
{\rm trace}\left( T^{{\cal H}^{\alpha }}\right) =\sum\limits_{i=1}^{\ell
}T_{ii}^{{\cal H}^{\alpha }},\ {\rm trace}\left( T^{{\cal H}^{\alpha
}}\right) ^{2}=\sum\limits_{i,j=1}^{\ell }\left( T_{ij}^{{\cal H}^{\alpha
}}\right) ^{2},\quad \alpha \in \left\{ 1,\ldots ,r\right\}  \label{traceT}
\end{equation}%
Then from \cite{Gulbahar_Meric_Kilic}, we have 
\begin{align}
\sum\limits_{t=1}^{r}\sum\limits_{i,j=1}^{\ell }\left( T_{ij}^{t}\right)
^{2}& =\frac{1}{2}\ell ^{2}\left\Vert H\right\Vert ^{2}+\frac{1}{2}%
\sum\limits_{t=1}^{r}\left( T_{11}^{t}-T_{22}^{t}-\cdots -T_{\ell \ell
}^{t}\right) ^{2}  \nonumber \\
& +2\sum\limits_{t=1}^{r}\sum\limits_{j=2}^{\ell }\left( T_{1j}^{t}\right)
^{2}-2\sum\limits_{t=1}^{r}\sum\limits_{2\leq i<j\leq \ell }\left\{
T_{ii}^{t}T_{jj}^{t}-\left( T_{ij}^{t}\right) ^{2}\right\} .
\label{eq-(3.5)}
\end{align}%
In the sequel, we also use curvature-like notation. Therefore, we fix those
as follows. 
\begin{equation}
2\tau _{{\cal H}}^{\left( \ker F_{\ast }\right) ^{\perp
}}=\sum\limits_{i,j=1}^{r}R^{\left( \ker F_{\ast }\right) ^{\perp }}\left(
h_{i},h_{j},h_{j},h_{i}\right) ,\quad 2\tau _{{\cal H}}^{N_{1}}=\sum%
\limits_{i,j=1}^{r}R^{N_{1}}\left( h_{i},h_{j},h_{j},h_{i}\right) ,
\label{eq-scalH}
\end{equation}%
\begin{equation}
2\tau _{{\cal V}}^{\ker F_{\ast }}=\sum\limits_{i,j=1}^{\ell }R^{\ker
F_{\ast }}\left( \vee _{i},\vee _{j},\vee _{j},\vee _{i}\right) ,\quad 2\tau
_{{\cal V}}^{N_{1}}=\sum\limits_{i,j=1}^{\ell }R^{N_{1}}\left( \vee
_{i},\vee _{j},\vee _{j},\vee _{i}\right) ,  \label{eq-scalV}
\end{equation}%
\begin{equation}
{\rm Ric}_{{\cal H}}^{\left( \ker F_{\ast }\right) ^{\perp
}}(h_{1})=\sum\limits_{j=1}^{r}R^{\left( \ker F_{\ast }\right) ^{\perp
}}\left( h_{1},h_{j},h_{j},h_{1}\right) ,\quad {\rm Ric}_{{\cal H}%
}^{N_{1}}(h_{1})=\sum\limits_{j=1}^{r}R^{N_{1}}\left(
h_{1},h_{j},h_{j},h_{1}\right) ,  \label{eq-RicH}
\end{equation}%
\begin{equation}
{\rm Ric}_{{\cal V}}^{\ker F_{\ast }}(\vee _{1})=\sum\limits_{j=1}^{\ell
}R^{\ker F_{\ast }}\left( \vee _{1},\vee _{j},\vee _{j},\vee _{1}\right)
,\quad {\rm Ric}_{{\cal V}}^{N_{1}}(\vee _{1})=\sum\limits_{j=1}^{\ell
}R^{N_{1}}\left( \vee _{1},\vee _{j},\vee _{j},\vee _{1}\right) .
\label{eq-RicV}
\end{equation}

\section{General Lower Bound for the Ricci Curvature Along the Vertical
Distribution}

\begin{theorem}
\label{Theorem 1}Let $F:(N_{1}^{m_{1}},g_{1})\rightarrow
(N_{2}^{m_{2}},g_{2})$ be a Riemannian submersion between two Riemannian
manifolds. Then 
\begin{eqnarray}
{\rm Ric}^{\ker F_{\ast }}\left( X\right) &\leq &{\rm Ric}^{N_{1}}\left(
X\right) -\frac{\ell -1}{\ell }\left( 2\ell \left\Vert H\right\Vert
^{2}-\left\Vert T^{{\cal H}}\right\Vert ^{2}\right.  \nonumber \\
&&\left. -\left( \ell -2\right) \sqrt{\frac{\ell \left\Vert H\right\Vert
^{2}\left( \left\Vert T^{{\cal H}}\right\Vert ^{2}-\ell \left\Vert
H\right\Vert ^{2}\right) }{\ell -1}}\right)  \label{eq-GINQ-RS-V}
\end{eqnarray}%
The equality holds in (\ref{eq-GINQ-RS-V}) if and only if the matrices $%
\left( T_{ij}^{{\cal H}^{\alpha }}\right) $, with respect to the orthonormal
bases $\left\{ V_{1},\ldots ,V_{\ell }\right\} $ of $\left( \ker F_{\ast
p}\right) $ and $\left\{ h_{1},\ldots ,h_{r}\right\} $ of $\left( \ker
F_{\ast p}\right) ^{\perp }$ are of the following form:%
\begin{equation}
T_{ij}^{\alpha }=\left( 
\begin{array}{ccccc}
\lambda _{\alpha } & 0 & \cdots & 0 & 0 \\ 
0 & \mu _{\alpha } & \cdots & 0 & 0 \\ 
\vdots & \vdots & \ddots & \vdots & 0 \\ 
0 & 0 & \cdots & \mu _{\alpha } & 0 \\ 
0 & 0 & \cdots & 0 & \mu _{\alpha }%
\end{array}%
\right)  \label{eq-Ver-equality}
\end{equation}%
and for the matrices%
\[
\frac{\left\vert \lambda _{\alpha }-\mu_{\alpha}\right\vert }{\lambda
_{\alpha }+\left( \ell -1\right) \mu _{\alpha }}{\rm is\ invariant\ over\ }%
\alpha \in \left\{ 1,\ldots ,r\right\} 
\]%
where $\lambda _{\alpha }$ and $\mu _{\alpha }$ are two different eigen
value of each one of the matrices $\left( T_{ij}^{{\cal H}^{\alpha }}\right) 
$ of the symmetric $\left( 1,2\right) $ tensor field $T^{{\cal H}}$ such that%
\begin{eqnarray*}
\ \lambda _{\alpha } &=&\frac{{\rm trace}\left( T^{{\cal H}^{\alpha
}}\right) }{\ell }\bar{+}\frac{\ell -1}{\ell }\sqrt{\frac{\ell {\rm trace}%
\left( T^{{\cal H}^{\alpha }}\right) ^{2}-\left( {\rm trace}\left( T^{{\cal H%
}^{\alpha }}\right) \right) ^{2}}{\ell -1}}, \\
\mu _{\alpha } &=&\frac{{\rm trace}\left( T^{{\cal H}^{\alpha }}\right) }{%
\ell }\underline{+}\frac{1}{\ell }\sqrt{\frac{\ell {\rm trace}\left( T^{%
{\cal H}^{\alpha }}\right) ^{2}-\left( {\rm trace}\left( T^{{\cal H}^{\alpha
}}\right) \right) ^{2}}{\ell -1}}
\end{eqnarray*}
The equality conditions can be interpreted as follows. We observe that $%
g_1(v_1, T_{v_1} h_\alpha) \neq g_1(v_2, T_{v_2} h_\alpha) = \cdots
=g_1(v_{\ell-1}, T_{v_{\ell-1}} h_\alpha) = g_1(v_{\ell}, T_{v_{\ell}}
h_\alpha)$ with respect to all horizontal directions $(h_\alpha, \text{where}%
~ \alpha \in \{1, \dots, r\})$. Equivalently, there exist $r$ mutually
orthogonal horizontal unit vector fields such that the shape operator with
respect to all directions has an eigenvalue of multiplicity $(\ell-1)$ and
that for each $h_\alpha$ the distinguished eigendirections are the same
(namely $v_{1}$). Hence, the leaves of vertical space (called fibers of $F$)
are invariantly quasi-umbilical \cite{DHV_2008}.
\end{theorem}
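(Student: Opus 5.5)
The plan is to start from the Gauss-type equation (\ref{eq-(2.3)}) for the fibres, contract it to get a Ricci identity along the vertical distribution, and then bound the extrinsic term using Lemma \ref{Hineva} applied separately to each matrix $(T^{{\cal H}^{\alpha}}_{ij})$. Fix $p\in N_1$ and a unit vertical vector $X$. Choose orthonormal bases $\{V_1=X,V_2,\ldots,V_\ell\}$ of $\ker F_{\ast p}$ and $\{h_1,\ldots,h_r\}$ of $(\ker F_{\ast p})^\perp$. Put $U_1=U_4=X$ and $U_2=U_3=V_j$ in (\ref{eq-(2.3)}) and sum over $j$. This should give
\[
{\rm Ric}^{N_1}(X)={\rm Ric}^{\ker F_\ast}(X)+\sum_{\alpha=1}^{r}\Big(T^{{\cal H}^\alpha}_{11}\sum_{j=1}^{\ell}T^{{\cal H}^\alpha}_{jj}-\sum_{j=1}^{\ell}\big(T^{{\cal H}^\alpha}_{1j}\big)^2\Big).
\]
Here ${\rm Ric}$ means the vertical Ricci curvature, as in (\ref{eq-RicV}). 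The sign convention of the curvature tensor must be checked, so that this really yields "${\rm Ric}^{\ker F_\ast}\le{\rm Ric}^{N_1}-(\cdots)$". The theorem is therefore equivalent to a lower bound on the bracketed sum.

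For each $\alpha$, the matrix $(T^{{\cal H}^\alpha}_{ij})$ is symmetric, since $T_{U_1}U_2=T_{U_2}U_1$. Lemma \ref{Hineva} applies to it with $n=\ell$, $a_\alpha={\rm trace}(T^{{\cal H}^\alpha})$ and $b_\alpha={\rm trace}(T^{{\cal H}^\alpha})^2$, the latter taken in the sense of (\ref{traceT}), so it is the squared Frobenius norm. Each bracket is then at least
\[
\frac{\ell-1}{\ell^2}\Big(2a_\alpha^2-\ell b_\alpha-(\ell-2)|a_\alpha|\sqrt{\tfrac{\ell b_\alpha-a_\alpha^2}{\ell-1}}\Big).
\]
Next sum over $\alpha$, using $\sum_\alpha a_\alpha^2=\ell^2\|H\|^2$ from (\ref{mean_curv}) and $\sum_\alpha b_\alpha=\|T^{\cal H}\|^2$.

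The only non-linear term is $\sum_\alpha |a_\alpha|\sqrt{\ell b_\alpha-a_\alpha^2}$, and it carries a minus sign. An upper bound for it is therefore needed, which Cauchy–Schwarz supplies:
\[
\sum_\alpha |a_\alpha|\sqrt{\ell b_\alpha-a_\alpha^2}\le \Big(\sum_\alpha a_\alpha^2\Big)^{1/2}\Big(\ell\sum_\alpha b_\alpha-\sum_\alpha a_\alpha^2\Big)^{1/2}=\ell\|H\|\,\big(\ell\|T^{\cal H}\|^2-\ell^2\|H\|^2\big)^{1/2}.
\]
Collecting constants should produce exactly the right-hand side of (\ref{eq-GINQ-RS-V}): the factors $\frac{\ell-1}{\ell^2}\cdot\ell^2$ and $\frac{\ell-1}{\ell^2}\cdot\ell$ give $\frac{\ell-1}{\ell}\big(2\ell\|H\|^2-\|T^{\cal H}\|^2\big)$, and the radical becomes $\sqrt{\ell\|H\|^2(\|T^{\cal H}\|^2-\ell\|H\|^2)/(\ell-1)}$. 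I expect this bookkeeping to be the main obstacle. The normalisation of the root term in the statement has to match precisely what Cauchy–Schwarz delivers, and the factor $\ell$ versus $\ell^2$ in front of $\|H\|^2$ must be tracked carefully. If the constants do not align, I would re-derive Lemma \ref{Hineva} with $b$ denoting the squared norm and adjust.

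For the equality case, equality in the summed inequality forces equality at both places it was used. First, equality must hold in Lemma \ref{Hineva} for every $\alpha$. This gives the diagonal form (\ref{eq-Ver-equality}) with the stated $\lambda_\alpha,\mu_\alpha$, all in the same basis $\{V_1,\ldots,V_\ell\}$. Second, equality must hold in Cauchy–Schwarz. This means the vectors $(|a_\alpha|)_\alpha$ and $(\sqrt{\ell b_\alpha-a_\alpha^2})_\alpha$ are proportional. For the diagonal form one computes $\sqrt{(\ell b_\alpha-a_\alpha^2)/(\ell-1)}=|\lambda_\alpha-\mu_\alpha|$ and $a_\alpha=\lambda_\alpha+(\ell-1)\mu_\alpha$, so proportionality is exactly the invariance of $|\lambda_\alpha-\mu_\alpha|/(\lambda_\alpha+(\ell-1)\mu_\alpha)$ in $\alpha$. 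Conversely, these conditions make both steps equalities, and hence give equality in the theorem. The geometric interpretation then follows: each shape operator $T^{{\cal H}^\alpha}$ has the eigenvalue $\mu_\alpha$ with multiplicity $\ell-1$, and its distinguished eigendirection is the common vector $V_1$. This is invariant quasi-umbilicity of the fibres.
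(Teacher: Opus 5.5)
Your proposal is correct and follows the paper's own proof step for step. You contract (\ref{eq-(2.3)}) to get (\ref{eq-ver-1}), apply Lemma~\ref{Hineva} to each symmetric matrix $(T^{{\cal H}^{\alpha}}_{ij})$ with $b$ read as the squared Frobenius norm, rewrite the sum over $\alpha$ via (\ref{mean_curv}), bound the radical term by Cauchy--Schwarz, and read off the equality case from equality in both steps; your constants reproduce the stated bound exactly, and the sign check you flag is settled by taking (\ref{eq-(2.3)}) verbatim, as the paper does, which gives precisely your identity.
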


\begin{proof}
Let $\left\{ V_{1},\ldots ,V_{\ell }\right\} $ be an
orthonormal basis of ${\cal V}_{p}$, and $\left\{ h_{1},\ldots
,h_{r}\right\} $ be an orthonormal basis of ${\cal H}_{p}$, then from (\ref%
{eq-(2.3)}) and (\ref{eq-scalV}), we obtain 
\begin{equation}
{\rm Ric}^{\ker F_{\ast }}\left( V_{1}\right) ={\rm Ric}^{N_{1}}\left(
V_{1}\right) -\sum_{\alpha =1}^{r}\left( T_{11}^{{\cal H}^{\alpha
}}\sum_{j=1}^{\ell }T_{jj}^{{\cal H}^{\alpha }}-\sum_{j=1}^{\ell }\left(
T_{1j}^{{\cal H}^{\alpha }}\right) ^{2}\right)  \label{eq-ver-1}
\end{equation}%
By Using Lemma \ref{Hineva}, we get 
\begin{eqnarray}
T_{11}^{{\cal H}^{\alpha }}\sum_{j=1}^{r}T_{jj}^{{\cal H}^{\alpha
}}-\sum_{j=1}^{r}\left( T_{1j}^{{\cal H}^{\alpha }}\right) ^{2} &\geq &\frac{%
\ell -1}{\ell ^{2}}\left( 2\sum_{\alpha =1}^{r}\left( \sum_{j=1}^{\ell
}T_{jj}^{{\cal H}^{\alpha }}\right) ^{2}-\ell \sum_{\alpha
=1}^{r}\sum_{i,j=1}^{\ell }\left( T_{ij}^{{\cal H}^{\alpha }}\right)
^{2}\right.  \nonumber \\
&&\left. -\left( \ell -2\right) \sum_{\alpha =1}^{r}\left\vert
\sum_{j=1}^{\ell }T_{jj}^{{\cal H}^{\alpha }}\right\vert \sqrt{\frac{\ell
\sum_{i,j=1}^{\ell }\left( T_{ij}^{{\cal H}^{\alpha }}\right) ^{2}-\left(
\sum_{j=1}^{\ell }T_{jj}^{{\cal H}^{\alpha }}\right) ^{2}}{\ell -1}}\right\}
\label{eq-ver-2}
\end{eqnarray}%
From (\ref{eq-ver-1}) and (\ref{eq-ver-2}), we have

\begin{eqnarray}
{\rm Ric}^{\ker F_{\ast }}\left( V_{1}\right) &\leq &{\rm Ric}^{N_{1}}\left(
V_{1}\right) -\frac{\ell -1}{\ell ^{2}}\left( 2\sum_{\alpha =1}^{r}\left(
\sum_{j=1}^{\ell }T_{jj}^{{\cal H}^{\alpha }}\right) ^{2}-\ell \sum_{\alpha
=1}^{r}\sum_{i,j=1}^{\ell }\left( T_{ij}^{{\cal H}^{\alpha }}\right)
^{2}\right.  \nonumber \\
&&\left. -\left( \ell -2\right) \sum_{\alpha =1}^{r}\left\vert
\sum_{j=1}^{\ell }T_{jj}^{{\cal H}^{\alpha }}\right\vert \sqrt{\frac{\ell
\sum_{i,j=1}^{\ell }\left( T_{ij}^{{\cal H}^{\alpha }}\right) ^{2}-\left(
\sum_{j=1}^{\ell }T_{jj}^{{\cal H}^{\alpha }}\right) ^{2}}{\ell -1}}\right\}
\label{eq-vert-3}
\end{eqnarray}%
By using Cauchy Squartz inequality, we have%
\begin{eqnarray}
&&\sum_{\alpha =1}^{r}\left\vert \sum_{j=1}^{\ell }T_{jj}^{{\cal H}^{\alpha
}}\right\vert \sqrt{\frac{\ell \sum_{i,j=1}^{\ell }\left( T_{ij}^{{\cal H}%
^{\alpha }}\right) ^{2}-\left( \sum_{j=1}^{\ell }T_{jj}^{{\cal H}^{\alpha
}}\right) ^{2}}{\ell -1}}  \nonumber \\
&\leq &\ell \left\Vert H\right\Vert \sqrt{\frac{\ell \left\Vert T^{{\cal H}%
}\right\Vert ^{2}-\ell ^{2}\left\Vert H\right\Vert ^{2}}{\ell -1}}
\label{Cauchy vert}
\end{eqnarray}%
From (\ref{eq-vert-3}), (\ref{Cauchy vert}) and (\ref{mean_curv}), we get%
\begin{eqnarray*}
{\rm Ric}^{\ker F_{\ast }}\left( V_{1}\right) &\leq &{\rm Ric}^{N_{1}}\left(
V_{1}\right) -\frac{\ell -1}{\ell }\left( 2\ell \left\Vert H\right\Vert
^{2}-\left\Vert T^{{\cal H}}\right\Vert ^{2}\right. \\
&&\left. -\left( \ell -2\right) \sqrt{\frac{\ell \left\Vert H\right\Vert
^{2}\left( \left\Vert T^{{\cal H}}\right\Vert ^{2}-\ell \left\Vert
H\right\Vert ^{2}\right) }{\ell -1}}\right)
\end{eqnarray*}%
Since, we can choose $V_{1}=X$ as any unit vector in $\left( \ker F_{\ast
}\right) $, then we have%
\begin{eqnarray*}
{\rm Ric}^{\ker F_{\ast }}\left( X\right) &\leq &{\rm Ric}^{N_{1}}\left(
X\right) -\frac{\ell -1}{\ell }\left( 2\ell \left\Vert H\right\Vert
^{2}-\left\Vert T^{{\cal H}}\right\Vert ^{2}\right. \\
&&\left. -\left( \ell -2\right) \sqrt{\frac{\ell \left\Vert H\right\Vert
^{2}\left( \left\Vert T^{{\cal H}}\right\Vert ^{2}-\ell \left\Vert
H\right\Vert ^{2}\right) }{\ell -1}}\right)
\end{eqnarray*}%
Now we prove the equality case it holds if equality holds in (\ref{eq-ver-2}%
) and (\ref{Cauchy vert}), so we get desired equality cases.
\end{proof} 

\section{General Lower Bound for the Ricci Curvature Along the Vertical and
Horizontal Distributions}

\begin{theorem}
Let $F:(N_{1}^{m_{1}},g_{1})\rightarrow (N_{2}^{m_{2}},g_{2})$ be a
Riemannian submersion between two Riemannian manifolds. Then%
\begin{align}
& {\rm Ric}_{{\cal V}}^{N_{1}}\left( \vee _{1}\right) +{\rm Ric}_{{\cal H}%
}^{N_{1}}\left( h_{1}\right) +\sum\limits_{t=1}^{r}\sum\limits_{j=1}^{\ell
}R^{N_{1}}\left( h_{i},V_{j},V_{j},h_{i}\right)  \nonumber \\
& \geq {\rm Ric}_{{\cal V}}^{\ker F_{\ast }}\left( \vee _{1}\right) +{\rm Ric%
}_{{\cal H}}^{\left( \ker F_{\ast }\right) ^{\perp }}\left( h_{1}\right) +%
\frac{1}{2}\frac{r-1}{r}\left( 2r\left\Vert H\right\Vert ^{2}-\left\Vert T^{%
{\cal H}}\right\Vert ^{2}\right.  \nonumber \\
& \left. -\left( r-2\right) \sqrt{\frac{r\left\Vert H\right\Vert ^{2}\left(
\left\Vert T^{{\cal H}}\right\Vert ^{2}-r\left\Vert H\right\Vert ^{2}\right) 
}{r-1}}\right) -\frac{1}{4}\left( \left( T_{11}^{t}\right) ^{2}+\left(
\sum_{j=2}^{r}T_{jj}^{t}\right) ^{2}+2\sum\limits_{j=2}^{r}\left(
T_{1j}^{t}\right) ^{2}\right)  \nonumber \\
& +3\sum\limits_{\alpha =1}^{\ell }\sum\limits_{j=2}^{r}\left(
A_{1j}^{\alpha }\right) ^{2}-\delta \left( N\right) +\left\Vert T^{{\cal V}%
}\right\Vert ^{2}-\left\Vert A^{{\cal H}}\right\Vert ^{2}+\frac{\ell ^{2}}{4}%
\left\Vert H\right\Vert ^{2}.  \label{eq-GINQ-RS-HV}
\end{align}%
The equality case follows Theorem \ref{Theorem 1}.
\end{theorem}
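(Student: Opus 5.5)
We combine the three O'Neill relations (\ref{eq-(2.3)}), (\ref{eq-(2.4)}) and (\ref{eq-(2.5)}), evaluated on the orthonormal frames $\{\vee_1,\ldots,\vee_\ell\}$ of $\ker F_{\ast p}$ and $\{h_1,\ldots,h_r\}$ of $(\ker F_{\ast p})^{\perp}$, and then estimate only the $T$-part by Lemma \ref{Hineva}, exactly as in the proof of Theorem \ref{Theorem 1}.

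\emph{Vertical part.} Contracting (\ref{eq-(2.3)}) with $U_1=U_4=\vee_1$, $U_2=U_3=\vee_j$ gives (\ref{eq-ver-1}):
\[
{\rm Ric}_{\cal V}^{N_1}(\vee_1)={\rm Ric}_{\cal V}^{\ker F_\ast}(\vee_1)+\sum_{t}\Big(T^t_{11}\sum_j T^t_{jj}-\sum_j (T^t_{1j})^2\Big).
\]
Rather than applying Lemma \ref{Hineva} directly, I would first rewrite the bracket using the identity (\ref{eq-(3.5)}) of \cite{Gulbahar_Meric_Kilic}. The polarisation $T^t_{11}\sum_{j\ge2}T^t_{jj}=\frac14\big[(\sum_j T^t_{jj})^2-(T^t_{11}-\sum_{j\ge2}T^t_{jj})^2\big]$ produces the $\frac{\ell^2}{4}\|H\|^2$ term and the correction $-\frac14\big((T^t_{11})^2+(\sum_{j\ge2}T^t_{jj})^2+2\sum_{j\ge2}(T^t_{1j})^2\big)$. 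This half-splitting explains the factor $\frac12$ in front of the Hineva-type bracket. The remaining half of $T^t_{11}\sum_j T^t_{jj}-\sum_j(T^t_{1j})^2$ is bounded below by Lemma \ref{Hineva}, followed by the Cauchy--Schwarz step (\ref{Cauchy vert}), which yields $\frac12\frac{\cdot-1}{\cdot}(2\cdot\|H\|^2-\|T^{\cal H}\|^2-\cdots)$.

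\emph{Horizontal part.} Contracting (\ref{eq-(2.4)}) with $Y_1=Y_4=h_1$, $Y_2=Y_3=h_j$ gives
\[
{\rm Ric}_{\cal H}^{N_1}(h_1)={\rm Ric}_{\cal H}^{(\ker F_\ast)^\perp}(h_1)-3\sum_{\alpha}\sum_{j\ge2}(A^\alpha_{1j})^2,
\]
using the skew-symmetry $A_{h_1}h_1=0$. On the left of (\ref{eq-GINQ-RS-HV}) this is moved across to become $+3\sum(A^\alpha_{1j})^2$.

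\emph{Mixed part.} Trace (\ref{eq-(2.5)}) over $Y_1=Y_2=h_i$ and $U_1=U_2=\vee_j$. The $\nabla T$ term gives $\delta(N)$ by (\ref{eq-(3.3)}). The $\nabla A$ term should vanish, or be absorbed, after summation. The last two terms give $-\|T^{\cal V}\|^2+\|A^{\cal H}\|^2$ by (\ref{TVert}) and (\ref{AHor}). Keeping track of the sign convention $R(X,Y,Y,X)$ versus $R(X,Y,X,Y)$, moving these across gives $-\delta(N)+\|T^{\cal V}\|^2-\|A^{\cal H}\|^2$.

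Summing the three pieces yields (\ref{eq-GINQ-RS-HV}). Equality forces equality in Lemma \ref{Hineva} and in Cauchy--Schwarz, i.e.\ the conditions of Theorem \ref{Theorem 1}.

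The main obstacle is the bookkeeping in the vertical part. We must check precisely how (\ref{eq-(3.5)}) and Lemma \ref{Hineva} are combined so that the constants $\frac12$ and $\frac14$ and the $\frac{\ell^2}{4}\|H\|^2$ term come out exactly. The statement writes $r$ where $\ell$ is natural for the $T$-matrices, so I would carry out the computation with the vertical dimension and reconcile the indices at the end. A secondary point is justifying the disappearance of the $\nabla A$ trace in the mixed term, which I would handle by its skew-symmetry in the horizontal arguments.
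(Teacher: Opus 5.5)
Your proposal is correct in substance. The final estimate is the same as the paper's: split the $T$-term in half, apply Lemma \ref{Hineva} to one half, then (\ref{Cauchy vert}). But you reach the underlying identity (\ref{eq-Ric-equal}) by a different and more direct route.

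The paper starts from the full scalar-curvature decomposition (\ref{eq-scalN1}) imported from the literature and rewrites $\|T^{\cal H}\|^2$ through (\ref{eq-(3.5)}). It then removes the pairs $2\le i<j$ in each distribution by applying (\ref{eq-(2.3)}) and (\ref{eq-(2.4)}) to them. What survives is the Ricci terms in the directions $\vee_1$, $h_1$ and the full mixed sum.

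You instead contract (\ref{eq-(2.3)}), (\ref{eq-(2.4)}) and (\ref{eq-(2.5)}) only on the pairs that actually occur on the left-hand side. This needs no imported formula, and it makes the vanishing of the $\nabla A$ trace explicit, whereas the paper hides it inside (\ref{eq-scalN1}). It also shows what the constants really are. With $X_t=T^t_{11}\sum_jT^t_{jj}-\sum_j(T^t_{1j})^2$ one has exactly
\[
\tfrac12 X_t=\tfrac14\Big(\sum_{j}T^t_{jj}\Big)^2-\tfrac14\Big((T^t_{11})^2+\Big(\sum_{j\ge2}T^t_{jj}\Big)^2+2\sum_{j\ge2}(T^t_{1j})^2\Big).
\]
This is just $2as=(a+s)^2-a^2-s^2$, so neither polarisation nor (\ref{eq-(3.5)}) is needed.

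Consequently the theorem is equivalent to the estimate $\sum_tX_t\ge\frac{\ell-1}{\ell}\bigl(2\ell\|H\|^2-\|T^{\cal H}\|^2-\cdots\bigr)$ of Theorem \ref{Theorem 1}, plus exact identities. That is why its equality case is the one of Theorem \ref{Theorem 1}. It is also why $\ell$, not $r$, belongs in that bracket, as you suspected; the paper's own final display has $\ell$ there too. The paper's route gains only the reuse of a published scalar-curvature formula in the usual Chen--Ricci template, at the cost of a detour through $\tau^{N_1}$.

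Two details need fixing.

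\emph{Horizontal sign.} Take (\ref{eq-(2.4)}) with $Y_1=Y_4=h_1$, $Y_2=Y_3=h_j$, and use $A_{h_j}h_1=-A_{h_1}h_j$ and $A_{h_j}h_j=0$. This gives $R^{N_1}(h_1,h_j,h_j,h_1)=R^{(\ker F_\ast)^\perp}(h_1,h_j,h_j,h_1)+3\sum_\alpha(A^\alpha_{1j})^2$, so the $+3$ comes out directly. Your ``moving across'' could not turn $-3$ into $+3$, because that term sits on the same side as ${\rm Ric}^{(\ker F_\ast)^\perp}_{\cal H}(h_1)$.

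\emph{The $\nabla A$ term.} It vanishes termwise, not only after summation. Since $A_XX=0$, $A_E$ depends only on ${\cal H}E$, and $A$ is skew on horizontal pairs, one gets $(\nabla_UA)_XX=-A_X({\cal V}\nabla_UX)$. This vector is horizontal, so $g((\nabla_{\vee_j}A)(h_i,h_i),\vee_j)=0$.

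In the mixed part, the sign change you invoke is legitimate: it is $R^{N_1}(h_i,\vee_j,\vee_j,h_i)=-R^{N_1}(h_i,\vee_j,h_i,\vee_j)$.
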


\begin{proof}
Let $\left\{ V_{1},\ldots ,V_{\ell }\right\} $ be an
orthonormal basis of ${\cal V}_{p}$, and $\left\{ h_{1},\ldots
,h_{r}\right\} $ be an orthonormal basis of ${\cal H}_{p}$, By using (\ref%
{eq-(2.3)}), (\ref{eq-(2.4)}) and (\ref{eq-(2.5)}). From \cite[Equation
(4.11)]{Aytimur_Ozgur}, we have the scalar curvature $\tau ^{N_{1}}$ of $%
N_{1}$%
\begin{eqnarray}
2\tau ^{N_{1}} &=&2\tau _{{\cal H}}^{\left( \ker F_{\ast }\right) ^{\perp
}}+2\tau _{{\cal V}}^{\ker F_{\ast }}+\ell ^{2}\left\Vert H\right\Vert
^{2}+3\sum\limits_{i,j=1}^{r}g_{1}\left(
A_{h_{i}}h_{j},A_{h_{i}}h_{j}\right)
-\sum\limits_{t=1}^{r}\sum\limits_{i,j=1}^{\ell }\left( T_{ij}^{t}\right)
^{2}  \nonumber \\
&&-2\sum\limits_{j=1}^{\ell }\sum\limits_{i=1}^{r}g_{1}\left( \left( \nabla
_{h_{i}}^{1}T\right) \left( \vee _{j},\vee _{j}\right) ,h_{i}\right)
+2\sum\limits_{j=1}^{\ell }\sum\limits_{i=1}^{r}\left\{ g_{1}\left( T_{\vee
_{j}}h_{i},T_{\vee _{j}}h_{i}\right) -g_{1}\left( A_{h_{i}}\vee
_{j},A_{h_{i}}\vee _{j}\right) \right\} .  \label{eq-scalN1}
\end{eqnarray}%
Utilizing (\ref{eq-(3.5)}), (\ref{TVert}), (\ref{AHor}), (\ref{eq-scalH})
and (\ref{eq-scalV}), we obtain 
\begin{align}
\tau ^{N_{1}}& =\sum\limits_{1\leq i<j\leq \ell }R^{\ker F_{\ast }}\left(
\vee _{i},\vee _{j},\vee _{j},\vee _{i}\right) +\sum\limits_{1\leq i<j\leq
r}R^{\left( \ker F_{\ast }\right) ^{\perp }}\left(
h_{i},h_{j},h_{j},h_{i}\right) +\frac{\ell ^{2}}{4}\left\Vert H\right\Vert
^{2}  \nonumber \\
& \quad -\frac{1}{4}\sum\limits_{t=1}^{r}\left[ T_{11}^{t}-T_{22}^{t}-\cdots
-T_{\ell \ell }^{t}\right] ^{2}-\sum\limits_{t=1}^{r}\sum\limits_{j=2}^{\ell
}\left( T_{1j}^{t}\right) ^{2}+\sum\limits_{t=1}^{r}\sum\limits_{2\leq
i<j\leq \ell }\left\{ T_{ii}^{t}T_{jj}^{t}-\left( T_{ij}^{t}\right)
^{2}\right\}  \nonumber \\
& \quad +3\sum\limits_{\alpha =1}^{\ell }\sum\limits_{j=2}^{r}\left(
A_{1j}^{\alpha }\right) ^{2}+3\sum\limits_{\alpha =1}^{\ell
}\sum\limits_{2\leq i<j\leq r}\left( A_{ij}^{\alpha }\right) ^{2}-\delta
\left( N\right) +\left\Vert T^{{\cal V}}\right\Vert ^{2}-\left\Vert A^{{\cal %
H}}\right\Vert ^{2}.  \label{eq-scalN2}
\end{align}%
From (\ref{eq-(2.3)}), (\ref{eq-(2.4)}) and (\ref{eq-scalN2}), we obtain 
\begin{align}
\tau ^{N_{1}}=& \sum\limits_{1\leq i<j\leq \ell }R^{\ker F_{\ast }}\left(
\vee _{i},\vee _{j},\vee _{j},\vee _{i}\right) +\sum\limits_{1\leq i<j\leq
r}R^{\left( \ker F_{\ast }\right) ^{\perp }}\left(
h_{i},h_{j},h_{j},h_{i}\right)  \nonumber \\
& -\frac{1}{4}\sum\limits_{t=1}^{r}\left[ T_{11}^{t}-T_{22}^{t}-\cdots
-T_{\ell \ell }^{t}\right] ^{2}-\sum\limits_{t=1}^{r}\sum\limits_{j=2}^{\ell
}\left( T_{1j}^{t}\right) ^{2}+3\sum\limits_{\alpha =1}^{\ell
}\sum\limits_{j=2}^{r}\left( A_{1j}^{\alpha }\right) ^{2}  \nonumber \\
& +\sum\limits_{2\leq i<j\leq \ell }R^{N_{1}}\left( \vee _{i},\vee _{j},\vee
_{j},\vee _{i}\right) -\sum\limits_{2\leq i<j\leq \ell }R^{\ker F_{\ast
}}\left( \vee _{i},\vee _{j},\vee _{j},\vee _{i}\right)  \nonumber \\
& +\sum\limits_{2\leq i<j\leq r}R^{N_{1}}\left(
h_{i},h_{j},h_{j},h_{i}\right) -\sum\limits_{2\leq i<j\leq r}R^{\left( \ker
F_{\ast }\right) ^{\perp }}\left( h_{i},h_{j},h_{j},h_{i}\right)  \nonumber
\\
& -\delta \left( N\right) +\left\Vert T^{{\cal V}}\right\Vert
^{2}-\left\Vert A^{{\cal H}}\right\Vert ^{2}+\frac{\ell ^{2}}{4}\left\Vert
H\right\Vert ^{2}.  \label{tau_N1}
\end{align}%
Since we have%
\begin{eqnarray}
&&-\frac{1}{4}\sum\limits_{t=1}^{r}\left[ T_{11}^{t}-T_{22}^{t}-\cdots
-T_{\ell \ell }^{t}\right] ^{2}-\sum\limits_{t=1}^{r}\sum\limits_{j=2}^{\ell
}\left( T_{1j}^{t}\right) ^{2}  \nonumber \\
&=&\frac{1}{2}\sum\limits_{t=1}^{r}\left\{ T_{11}^{t}\sum_{j=1}^{\ell
}T_{jj}^{t}-\sum\limits_{j=1}^{\ell }\left( T_{1j}^{t}\right) ^{2}\right\} -%
\frac{1}{4}\sum\limits_{t=1}^{r}\left( \left( T_{11}^{t}\right) ^{2}+\left(
\sum_{j=2}^{\ell }T_{jj}^{t}\right) ^{2}+2\sum\limits_{j=2}^{\ell }\left(
T_{1j}^{t}\right) ^{2}\right)  \label{eq-calculation}
\end{eqnarray}%
Then by (\ref{eq-calculation}) and (\ref{tau_N1}), we have 
\begin{align}
& {\rm Ric}_{{\cal V}}^{N_{1}}\left( \vee _{1}\right) +{\rm Ric}_{{\cal H}%
}^{N_{1}}\left( h_{1}\right) +\sum\limits_{t=1}^{r}\sum\limits_{j=1}^{\ell
}R^{N_{1}}\left( h_{i},V_{j},V_{j},h_{i}\right)  \nonumber \\
& ={\rm Ric}_{{\cal V}}^{\ker F_{\ast }}\left( \vee _{1}\right) +{\rm Ric}_{%
{\cal H}}^{\left( \ker F_{\ast }\right) ^{\perp }}\left( h_{1}\right) +\frac{%
1}{2}\sum\limits_{t=1}^{r}\left\{ T_{11}^{t}\sum_{j=1}^{\ell
}T_{jj}^{t}-\sum\limits_{j=1}^{\ell }\left( T_{1j}^{t}\right) ^{2}\right\} 
\nonumber \\
& -\frac{1}{4}\sum\limits_{t=1}^{r}\left( \left( T_{11}^{t}\right)
^{2}+\left( \sum_{j=2}^{\ell }T_{jj}^{t}\right)
^{2}+2\sum\limits_{j=2}^{\ell }\left( T_{1j}^{t}\right) ^{2}\right) 
\nonumber \\
& +3\sum\limits_{\alpha =1}^{\ell }\sum\limits_{j=2}^{r}\left(
A_{1j}^{\alpha }\right) ^{2}-\delta \left( N\right) +\left\Vert T^{{\cal V}%
}\right\Vert ^{2}-\left\Vert A^{{\cal H}}\right\Vert ^{2}+\frac{\ell ^{2}}{4}%
\left\Vert H\right\Vert ^{2}.  \label{eq-Ric-equal}
\end{align}%
Using Lemma \ref{Hineva} into (\ref{eq-Ric-equal}), we get%
\begin{align}
& {\rm Ric}_{{\cal V}}^{N_{1}}\left( \vee _{1}\right) +{\rm Ric}_{{\cal H}%
}^{N_{1}}\left( h_{1}\right) +\sum\limits_{t=1}^{r}\sum\limits_{j=1}^{\ell
}R^{N_{1}}\left( h_{i},V_{j},V_{j},h_{i}\right)  \nonumber \\
& \geq {\rm Ric}_{{\cal V}}^{\ker F_{\ast }}\left( \vee _{1}\right) +{\rm Ric%
}_{{\cal H}}^{\left( \ker F_{\ast }\right) ^{\perp }}\left( h_{1}\right) +%
\frac{1}{2}\frac{\ell -1}{\ell ^{2}}\left( 2\sum_{t=1}^{r}\left(
\sum_{j=1}^{\ell }T_{jj}^{t}\right) ^{2}-\ell
\sum_{t=1}^{r}\sum_{i,j=1}^{\ell }\left( T_{ij}^{t}\right) ^{2}\right. 
\nonumber \\
& \left. -\left( \ell -2\right) \sum_{t=1}^{r}\left\vert \sum_{j=1}^{\ell
}T_{jj}^{{\cal H}^{\alpha }}\right\vert \sqrt{\frac{\ell
\sum_{i,j=1}^{r}\left( T_{ij}^{t}\right) ^{2}-\left(
\sum_{j=1}^{r}T_{jj}^{t}\right) ^{2}}{r-1}}\right\}  \nonumber \\
& -\frac{1}{4}\sum_{t=1}^{r}\left( \left( T_{11}^{t}\right) ^{2}+\left(
\sum_{j=2}^{\ell }T_{jj}^{t}\right) ^{2}+2\sum\limits_{j=2}^{\ell }\left(
T_{1j}^{t}\right) ^{2}\right)  \nonumber \\
& +3\sum\limits_{\alpha =1}^{\ell }\sum\limits_{j=2}^{r}\left(
A_{1j}^{\alpha }\right) ^{2}-\delta \left( N\right) +\left\Vert T^{{\cal V}%
}\right\Vert ^{2}-\left\Vert A^{{\cal H}}\right\Vert ^{2}+\frac{\ell ^{2}}{4}%
\left\Vert H\right\Vert ^{2}.  \label{ineq-using-lemma}
\end{align}%
From (\ref{ineq-using-lemma}), (\ref{Cauchy vert}) and (\ref{mean_curv}), we
obtain

\begin{align*}
& {\rm Ric}_{{\cal V}}^{N_{1}}\left( \vee _{1}\right) +{\rm Ric}_{{\cal H}%
}^{N_{1}}\left( h_{1}\right) +\sum\limits_{t=1}^{r}\sum\limits_{j=1}^{\ell
}R^{N_{1}}\left( h_{i},V_{j},V_{j},h_{i}\right) \\
& \geq {\rm Ric}_{{\cal V}}^{\ker F_{\ast }}\left( \vee _{1}\right) +{\rm Ric%
}_{{\cal H}}^{\left( \ker F_{\ast }\right) ^{\perp }}\left( h_{1}\right) +%
\frac{1}{2}\frac{\ell -1}{\ell }\left( 2\ell \left\Vert H\right\Vert
^{2}-\left\Vert T^{{\cal H}}\right\Vert ^{2}\right. \\
& \left. -\left( \ell -2\right) \sqrt{\frac{\ell \left\Vert H\right\Vert
^{2}\left( \left\Vert T^{{\cal H}}\right\Vert ^{2}-\ell \left\Vert
H\right\Vert ^{2}\right) }{\ell -1}}\right) -\frac{1}{4}\sum_{t=1}^{r}\left(
\left( T_{11}^{t}\right) ^{2}+\left( \sum_{j=2}^{r}T_{jj}^{t}\right)
^{2}+2\sum\limits_{j=2}^{r}\left( T_{1j}^{t}\right) ^{2}\right) \\
& +3\sum\limits_{\alpha =1}^{\ell }\sum\limits_{j=2}^{r}\left(
A_{1j}^{\alpha }\right) ^{2}-\delta \left( N\right) +\left\Vert T^{{\cal V}%
}\right\Vert ^{2}-\left\Vert A^{{\cal H}}\right\Vert ^{2}+\frac{\ell ^{2}}{4}%
\left\Vert H\right\Vert ^{2}.
\end{align*}%
The equality holds if it holds in (\ref{ineq-using-lemma}), we get desired
result.
\end{proof} 

\section{Applications}

\begin{proposition}
Let $F:\left( N_{1},g_{1}\right) \rightarrow \left( N_{2},g_{2}\right) $ be
a Riemannian submersion from real space form onto a Riemannian manifold with 
$\dim N_{1}=m_{1}$ and $\dim N_{2}=m_{2}$. Assume that $\left\{ h_{1},\ldots
,h_{r}\right\} $ and $\left\{ v_{1},\ldots ,v_{\ell }\right\} $ be bases of $%
{\cal H}_{p}$ and ${\cal V}_{p}$, respectively. Then

\begin{enumerate}
\item By using (\ref{eq-RicV}) and (\ref{eq-RSF}), we get%
\begin{equation}
{\rm Ric}_{{\cal V}}^{N_{1}}(v_{1})=c\left( \ell -1\right)
\label{eq-RSF-Ric-1}
\end{equation}

\item By using (\ref{eq-RicH}) and (\ref{eq-RSF}), we get 
\begin{equation}
{\rm Ric}_{{\cal H}}^{N_{1}}(h_{1})=c\left( r-1\right)  \label{eq-RSF-Ric-2}
\end{equation}

\item By using (\ref{eq-RSF}), we get%
\begin{equation}
\sum\limits_{t=1}^{r}\sum\limits_{j=1}^{\ell }R^{N_{1}}\left(
h_{i},v_{j},v_{j},h_{i}\right) =c\ell r  \label{eq-RSF-Ric-3}
\end{equation}
\end{enumerate}
\end{proposition}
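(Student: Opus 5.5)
The plan is a direct substitution of the real space form curvature (\ref{eq-RSF}) into the sums (\ref{eq-RicV}), (\ref{eq-RicH}) and the mixed sum, using only the orthonormality of $\{v_1,\ldots,v_\ell\}$ and $\{h_1,\ldots,h_r\}$. The bases have to be orthonormal for the stated values to come out, so I read them that way, as throughout Sections 3 and 4. First I record the four-argument convention $R^{N_1}(Z_1,Z_2,Z_3,Z_4)=g_1(R^{N_1}(Z_1,Z_2)Z_3,Z_4)$, which is implicit in (\ref{eq-(2.3)})--(\ref{eq-(2.5)}) and makes $R^{N_1}(X,Y,Y,X)$ the sectional-curvature numerator. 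For unit $X,Y$, (\ref{eq-RSF}) then gives
\[
R^{N_1}(X,Y,Y,X)=c\{g_1(Y,Y)g_1(X,X)-g_1(X,Y)^2\}=c\{1-g_1(X,Y)^2\}.
\]

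For item 1, I take $X=v_1$ and $Y=v_j$, so the summand is $c(1-\delta_{1j})$. Summing over $j=1,\ldots,\ell$ yields ${\rm Ric}_{{\cal V}}^{N_{1}}(v_1)=c(\ell-1)$, because the $j=1$ term vanishes. Item 2 is identical with $h_1$ and $h_j$, $j=1,\ldots,r$, giving $c(r-1)$.

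For item 3, I read the double sum with the index mismatch corrected: the outer index $t$ should be the index $i$ of $h_i$. I then take $X=h_i$ and $Y=v_j$. Since ${\cal H}_p\perp{\cal V}_p$, we have $g_1(h_i,v_j)=0$, so each term equals $c$. There are $r\ell$ such terms, and the sum is $c\ell r$.

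There is no genuine obstacle in this argument. The only points needing care are fixing the sign and ordering convention so the diagonal terms come out as $+c$ rather than $-c$, and noting the indexing typo in item 3 so that the sum really runs over all $i,j$.
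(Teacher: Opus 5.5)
Your proposal is correct and is exactly the paper's argument: substitute (\ref{eq-RSF}) into the defining sums with $R^{N_1}(X,Y,Y,X)=g_1(R^{N_1}(X,Y)Y,X)$, then use orthonormality together with ${\cal H}_p\perp{\cal V}_p$, reading the bases as orthonormal and the index $t$ in item 3 as $i$, just as you do. One minor remark: this convention is better anchored in (\ref{eq-RSF}) and the explicit Ricci definition (\ref{Ricci_Curvatures}) than in (\ref{eq-(2.3)})--(\ref{eq-(2.5)}), since the latter are O'Neill's formulas printed in his opposite sign convention for $R$; this does not affect your computation.
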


\begin{proposition}
Let $F:\left( N_{1},g_{1}\right) \rightarrow \left( N_{2},g_{2}\right) $ be
a Riemannian submersion from complex space form onto a Riemannian manifold
with $\dim N_{1}=m_{1}$ and $\dim N_{2}=m_{2}$. Assume that $\left\{
h_{1},\ldots ,h_{r}\right\} $ and $\left\{ v_{1},\ldots ,v_{\ell }\right\} $
be bases of ${\cal H}_{p}$ and ${\cal V}_{p}$, respectively. Then

\begin{enumerate}
\item By using (\ref{eq-RicV}) and (\ref{eq-GCSF}), we get%
\begin{equation}
{\rm Ric}_{{\cal V}}^{N_{1}}(v_{1})=\frac{c}{4}\left( \ell -1\right) +\frac{%
3c}{4}\left\Vert Qv_{1}\right\Vert ^{2}  \label{eq-CSF-Ric-1}
\end{equation}

\item By using (\ref{eq-RicH}) and (\ref{eq-GCSF}), we get%
\begin{equation}
{\rm Ric}_{{\cal H}}^{N_{1}}(h_{1})=\frac{c}{4}\left( r-1\right) +\frac{3c}{4%
}\left\Vert Ph_{1}\right\Vert ^{2}  \label{eq-CSF-Ric-2}
\end{equation}

\item By using (\ref{eq-GCSF}), we get%
\begin{equation}
\sum\limits_{t=1}^{r}\sum\limits_{j=1}^{\ell }R^{N_{1}}\left(
h_{i},v_{j},v_{j},h_{i}\right) =\frac{c}{4}\ell r+\frac{3c}{4}\left\Vert P^{%
{\cal V}}\right\Vert ^{2}  \label{eq-CSF-Ric-3}
\end{equation}
\end{enumerate}
\end{proposition}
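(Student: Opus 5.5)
This is a direct substitution of the complex space form curvature tensor (\ref{eq-GCSF}) into the sums (\ref{eq-RicV}), (\ref{eq-RicH}) and the mixed sum. The key step is to evaluate the single sectional-type term $R^{N_1}(X,Y,Y,X)=g_1(R^{N_1}(X,Y)Y,X)$ for orthonormal $X,Y$ once, and then sum it. Taking ${\cal Z}_1=X$, ${\cal Z}_2=Y$, ${\cal Z}_3=Y$ in (\ref{eq-GCSF}), the real part gives $\frac{c}{4}\{g(Y,Y)g(X,X)-g(X,Y)^2\}=\frac c4$. For the $J$-terms I will use the skew-symmetry $g_1(JU,W)=-g_1(U,JW)$ (so $g_1(Y,JY)=0$) together with $g_1(JX,Y)^2=g_1(X,JY)^2$. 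After paying attention to the sign convention linking $R(X,Y,Y,X)$ with $R(X,Y)Y$, this yields $R^{N_1}(X,Y,Y,X)=\frac c4\bigl(1+3g_1(X,JY)^2\bigr)$. The diagonal terms $j=1$ (or $i=j$) contribute $0$ on both sides, since the real part gives $1-1=0$ and $g_1(X,JX)=0$.

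For item 1, I sum over $j=1,\ldots,\ell$ with $X=v_1$, $Y=v_j$. The real parts give $\frac c4(\ell-1)$. The $J$-part gives $\frac{3c}{4}\sum_j g_1(Jv_1,v_j)^2$. By (\ref{decompose_GCSF_RM}) only the vertical component $Qv_1$ pairs with $v_j$, so this sum is exactly $\|Qv_1\|^2$, which is (\ref{eq-CSF-Ric-1}).

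Item 2 is identical with $X=h_1$, $Y=h_j$. Here only $Ph_1$ survives, giving $\frac c4(r-1)+\frac{3c}{4}\|Ph_1\|^2$, which is (\ref{eq-CSF-Ric-2}).

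For item 3, I sum over all $i\le r$, $j\le \ell$; there are no diagonal terms, since $h_i\perp v_j$. The real part gives $\frac c4\ell r$. The $J$-part gives $\frac{3c}{4}\sum_{i,j}g_1(Jh_i,v_j)^2$, and this is the quantity I will identify with $\|P^{\cal V}\|^2$. That is, it is the squared norm of the vertical component of $J$ restricted to ${\cal H}$, or equivalently $\sum_j\|P v_j\|^2$ by skew-symmetry. The main obstacle is notational rather than mathematical. The paper's $P,Q$ decomposition in (\ref{decompose_GCSF_RM}) is written with $P$ horizontal and $Q$ vertical, so I must fix the convention that $\|P^{\cal V}\|^2:=\sum_{i,j}g_1(Jh_i,v_j)^2=\sum_{j}\|Pv_j\|^2$, state this explicitly, and check the summation index typo ($t$ versus $i$) in the statement. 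With that convention fixed, (\ref{eq-CSF-Ric-3}) follows.
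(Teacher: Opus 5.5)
Your proposal is correct and takes the same route as the paper, which simply substitutes (\ref{eq-GCSF}) into the defining sums. Your computation $R^{N_1}(X,Y,Y,X)=\frac{c}{4}\bigl(1+3g_1(X,JY)^2\bigr)$ for orthonormal $X,Y$ and the identification of the $J$-sums with $\|Qv_1\|^2$, $\|Ph_1\|^2$ and $\sum_j\|Pv_j\|^2$ are right, and since the paper never defines $\|P^{\cal V}\|^2$, fixing it explicitly as $\sum_{i,j}g_1(Jh_i,v_j)^2=\sum_j\|Pv_j\|^2$ (and reading the index $t$ as $i$) is exactly what item 3 needs.
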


\begin{theorem}
Let $F:\left( N_{1},g_{1}\right) \rightarrow \left( N_{2},g_{2}\right) $ be
a Riemannian submersion from real space form onto a Riemannian manifold with 
$\dim N_{1}=m_{1}$ and $\dim N_{2}=m_{2}$. Assume that $\left\{ h_{1},\ldots
,h_{r}\right\} $ and $\left\{ v_{1},\ldots ,v_{\ell }\right\} $ be bases of $%
{\cal H}_{p}$ and ${\cal V}_{p}$, respectively. Then we have%
\begin{eqnarray}
{\rm Ric}^{\ker F_{\ast }}\left( X\right) &\leq &c\left( \ell -1\right) -%
\frac{\ell -1}{\ell }\left( 2\ell \left\Vert H\right\Vert ^{2}-\left\Vert T^{%
{\cal H}}\right\Vert ^{2}\right.  \nonumber \\
&&\left. -\left( \ell -2\right) \sqrt{\frac{\ell \left\Vert H\right\Vert
^{2}\left( \left\Vert T^{{\cal H}}\right\Vert ^{2}-\ell \left\Vert
H\right\Vert ^{2}\right) }{\ell -1}}\right)  \label{eq-RIC-V-RSF}
\end{eqnarray}%
\begin{align}
& c\left( \ell r+\ell +r-2\right)  \nonumber \\
& \geq {\rm Ric}_{{\cal V}}^{\ker F_{\ast }}\left( \vee _{1}\right) +{\rm Ric%
}_{{\cal H}}^{\left( \ker F_{\ast }\right) ^{\perp }}\left( h_{1}\right) +%
\frac{1}{2}\frac{\ell -1}{\ell }\left( 2\ell \left\Vert H\right\Vert
^{2}-\left\Vert T^{{\cal H}}\right\Vert ^{2}\right.  \nonumber \\
& \left. -\left( \ell -2\right) \sqrt{\frac{\ell \left\Vert H\right\Vert
^{2}\left( \left\Vert T^{{\cal H}}\right\Vert ^{2}-\ell \left\Vert
H\right\Vert ^{2}\right) }{\ell -1}}\right) -\frac{1}{4}\sum_{t=1}^{r}\left(
\left( T_{11}^{t}\right) ^{2}+\left( \sum_{j=2}^{r}T_{jj}^{t}\right)
^{2}+2\sum\limits_{j=2}^{r}\left( T_{1j}^{t}\right) ^{2}\right)  \nonumber \\
& +3\sum\limits_{\alpha =1}^{\ell }\sum\limits_{j=2}^{r}\left(
A_{1j}^{\alpha }\right) ^{2}-\delta \left( N\right) +\left\Vert T^{{\cal V}%
}\right\Vert ^{2}-\left\Vert A^{{\cal H}}\right\Vert ^{2}+\frac{\ell ^{2}}{4}%
\left\Vert H\right\Vert ^{2}.  \label{eq-RIC-HV-RSF}
\end{align}
\end{theorem}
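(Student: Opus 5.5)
The plan is to specialise the two general inequalities of Sections 3 and 4 to the case where $N_{1}$ is a real space form $N_{1}(c)$. The only new input needed is the evaluation of the ambient curvature terms that appear on the left-hand sides of (\ref{eq-GINQ-RS-V}) and (\ref{eq-GINQ-RS-HV}). These evaluations are exactly what the preceding proposition on real space forms provides, namely (\ref{eq-RSF-Ric-1}), (\ref{eq-RSF-Ric-2}) and (\ref{eq-RSF-Ric-3}).

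For (\ref{eq-RIC-V-RSF}) I would start from Theorem \ref{Theorem 1}. There I take the unit vertical vector $X$, complete it to an orthonormal basis $\{v_{1}=X,v_{2},\ldots ,v_{\ell }\}$ of ${\cal V}_{p}$, and note that ${\rm Ric}^{N_{1}}(X)$ in (\ref{eq-GINQ-RS-V}) means the vertical partial Ricci curvature ${\rm Ric}_{{\cal V}}^{N_{1}}(v_{1})$ of (\ref{eq-RicV}). That is how it arises in (\ref{eq-ver-1}) from the Gauss-type equation (\ref{eq-(2.3)}). Substituting (\ref{eq-RSF}) gives $R^{N_{1}}(v_{1},v_{j},v_{j},v_{1})=c\{g_{1}(v_{j},v_{j})g_{1}(v_{1},v_{1})-g_{1}(v_{1},v_{j})^{2}\}=c(1-\delta _{1j})$. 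Summing over $j$ gives $c(\ell -1)$, which is (\ref{eq-RSF-Ric-1}), and plugging this into (\ref{eq-GINQ-RS-V}) yields (\ref{eq-RIC-V-RSF}) verbatim. The equality characterisation is inherited unchanged from Theorem \ref{Theorem 1}, since no further inequality is introduced.

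For (\ref{eq-RIC-HV-RSF}) I would start from (\ref{eq-GINQ-RS-HV}) of the second theorem. I would replace its three left-hand terms as follows:
\begin{itemize}
\item ${\rm Ric}_{{\cal V}}^{N_{1}}(v_{1})$ by $c(\ell -1)$;
\item ${\rm Ric}_{{\cal H}}^{N_{1}}(h_{1})$ by $c(r-1)$, computed from (\ref{eq-RicH}) and (\ref{eq-RSF}) in the same way;
\item the mixed sum $\sum_{i=1}^{r}\sum_{j=1}^{\ell }R^{N_{1}}(h_{i},v_{j},v_{j},h_{i})$ by $c\ell r$.
\end{itemize}
The last value holds because $g_{1}(h_{i},v_{j})=0$, so each mixed sectional term equals $c$. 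The sum of the three is $c(\ell -1)+c(r-1)+c\ell r=c(\ell r+\ell +r-2)$, which is the left side of (\ref{eq-RIC-HV-RSF}). The right side is copied from the general theorem in the form it takes at the end of that proof, with the Hineva constants expressed through $\ell $. Again, the equality case follows that of the general theorem.

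The main obstacle is bookkeeping rather than mathematics. The summation index in the mixed term is written inconsistently in the paper ($t$ versus $i$), and the constant in front of the Hineva term appears with $r$ in the statement of (\ref{eq-GINQ-RS-HV}) but with $\ell $ in its proof. I would resolve both points by reading the mixed sum as over $i=1,\ldots ,r$ and by using the $\ell $-version. That version is the one obtained by applying Lemma \ref{Hineva} to the $\ell \times \ell $ matrices $(T_{ij}^{{\cal H}^{\alpha }})$, and it is the one displayed in (\ref{eq-RIC-HV-RSF}). Once these identifications are fixed, the result is a direct substitution.
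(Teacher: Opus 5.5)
Your route is exactly the paper's: its proof of this theorem is just the substitution of the space-form values (\ref{eq-RSF-Ric-1})--(\ref{eq-RSF-Ric-3}) into (\ref{eq-GINQ-RS-V}) and (\ref{eq-GINQ-RS-HV}). That substitution is not legitimate, because the two ingredients use opposite curvature sign conventions.

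Put $U_1=U_4=U$ and $U_2=U_3=V$ (orthonormal, vertical) in (\ref{eq-(2.3)}). This gives $R^{N_1}(U,V,V,U)=R^{\ker F_\ast}(U,V,V,U)+g_1(T_UU,T_VV)-\|T_UV\|^2$. The Gauss equation of the fibres, however, is $K^{\ker F_\ast}(U,V)=K^{N_1}(U,V)+g_1(T_UU,T_VV)-\|T_UV\|^2$; test it on round spheres of radius $\rho$ in $\mathbb{R}^3$.

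Hence (\ref{eq-(2.3)}) is valid only in O'Neill's convention $R(X,Y,Y,X)=-K(X,Y)$. The same holds for everything derived from it: (\ref{eq-ver-1}), (\ref{eq-GINQ-RS-V}), and likewise (\ref{eq-scalN1}) and (\ref{eq-GINQ-RS-HV}). Your evaluation $R^{N_1}(v_1,v_j,v_j,v_1)=c(1-\delta_{1j})$ from (\ref{eq-RSF}) uses the opposite convention $R(X,Y,Y,X)=+K(X,Y)$.

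In the convention in which the general theorems hold, the three ambient terms are $-c(\ell-1)$, $-c(r-1)$ and $-c\ell r$. Translated to the usual convention, the Gauss equation plus Lemma \ref{Hineva} actually give the lower bound
$\mathrm{Ric}^{\ker F_\ast}(X)\geq c(\ell-1)+\frac{\ell-1}{\ell}\bigl(2\ell\|H\|^2-\|T^{\mathcal{H}}\|^2-(\ell-2)\sqrt{\cdots}\bigr)$,
not the stated upper bound.

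This is not a harmless relabelling. With $c$ the sectional curvature, the statement is false, whatever sign convention is used for $\mathrm{Ric}$. Take $N_1=\mathbb{H}^n$ with $n\geq 3$ and metric $dt^2+e^{2t}|dx|^2$, so $c=-1$, and let $F(t,x)=t$ map onto $(\mathbb{R},dt^2)$. Then:
\begin{itemize}
\item $r=1$ and $\ell=n-1\geq 2$;
\item the fibres are flat horospheres, so $\mathrm{Ric}^{\ker F_\ast}=0$ and $\mathrm{Ric}^{(\ker F_\ast)^\perp}_{\mathcal{H}}(h_1)=0$;
\item $A=0$, $T_UV=-g_1(U,V)\partial_t$, $T_U\partial_t=U$ and $\delta(N)=0$;
\item $\|H\|^2=1$, $\|T^{\mathcal{H}}\|^2=\|T^{\mathcal{V}}\|^2=\ell$, and the square root vanishes.
\end{itemize}

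The right side of (\ref{eq-RIC-V-RSF}) is then $c(\ell-1)-(\ell-1)=-2(\ell-1)<0$, contradicting $\mathrm{Ric}^{\ker F_\ast}(X)=0$. In (\ref{eq-RIC-HV-RSF}), with your $\ell$-indexed sums, the left side is $c(2\ell-1)=-(2\ell-1)$. The right side is $\frac{\ell-1}{2}-\frac{1+(\ell-1)^2}{4}+\ell+\frac{\ell^2}{4}=2\ell-1$, so this inequality fails too. If $c$ is replaced by $-c$, the value consistent with (\ref{eq-(2.3)}), both become equalities.

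So the missing step is a sign-consistency check between (\ref{eq-(2.3)}) and (\ref{eq-RSF}). Your substitution and the paper's alike do not establish the displayed inequalities, which are false as written. What can be proved is the version with $-c$ in O'Neill's convention, or equivalently the reversed (lower-bound) inequalities in the standard convention.
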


\begin{proof}
By using (\ref{eq-RSF-Ric-1}) into (\ref{eq-GINQ-RS-V}%
), (\ref{eq-RSF-Ric-3}) into (\ref{eq-GINQ-RS-HV}), we get (\ref%
{eq-RIC-V-RSF}), (\ref{eq-RIC-HV-RSF}), respectively.
\end{proof}

\begin{theorem}
Let $F:\left( N_{1},g_{1}\right) \rightarrow \left( N_{2},g_{2}\right) $ be
a Riemannian submersion from complex space form onto a Riemannian manifold
with $\dim N_{1}=m_{1}$ and $\dim N_{2}=m_{2}$. Assume that $\left\{
h_{1},\ldots ,h_{r}\right\} $ and $\left\{ v_{1},\ldots ,v_{\ell }\right\} $
be bases of ${\cal H}_{p}$ and ${\cal V}_{p}$, respectively. Then we have%
\begin{eqnarray}
{\rm Ric}^{\ker F_{\ast }}\left( X\right) &\leq &\frac{c}{4}\left( \ell
-1\right) +\frac{3c}{4}\left\Vert Qv_{1}\right\Vert ^{2}-\frac{\ell -1}{\ell 
}\left( 2\ell \left\Vert H\right\Vert ^{2}-\left\Vert T^{{\cal H}%
}\right\Vert ^{2}\right.  \nonumber \\
&&\left. -\left( \ell -2\right) \sqrt{\frac{\ell \left\Vert H\right\Vert
^{2}\left( \left\Vert T^{{\cal H}}\right\Vert ^{2}-\ell \left\Vert
H\right\Vert ^{2}\right) }{\ell -1}}\right)  \label{eq-RIC-V-CSF}
\end{eqnarray}%
\begin{align}
& \frac{c}{4}\left( \ell r+\ell +r-2\right) +\frac{3c}{4}\left( \left\Vert
Qv_{1}\right\Vert ^{2}+\left\Vert Ph_{1}\right\Vert ^{2}+\left\Vert P^{{\cal %
V}}\right\Vert ^{2}\right)  \nonumber \\
& \geq {\rm Ric}_{{\cal V}}^{\ker F_{\ast }}\left( \vee _{1}\right) +{\rm Ric%
}_{{\cal H}}^{\left( \ker F_{\ast }\right) ^{\perp }}\left( h_{1}\right) +%
\frac{1}{2}\frac{\ell -1}{\ell }\left( 2\ell \left\Vert H\right\Vert
^{2}-\left\Vert T^{{\cal H}}\right\Vert ^{2}\right.  \nonumber \\
& \left. -\left( \ell -2\right) \sqrt{\frac{\ell \left\Vert H\right\Vert
^{2}\left( \left\Vert T^{{\cal H}}\right\Vert ^{2}-\ell \left\Vert
H\right\Vert ^{2}\right) }{\ell -1}}\right) -\frac{1}{4}\sum_{t=1}^{r}\left(
\left( T_{11}^{t}\right) ^{2}+\left( \sum_{j=2}^{r}T_{jj}^{t}\right)
^{2}+2\sum\limits_{j=2}^{r}\left( T_{1j}^{t}\right) ^{2}\right)  \nonumber \\
& +3\sum\limits_{\alpha =1}^{\ell }\sum\limits_{j=2}^{r}\left(
A_{1j}^{\alpha }\right) ^{2}-\delta \left( N\right) +\left\Vert T^{{\cal V}%
}\right\Vert ^{2}-\left\Vert A^{{\cal H}}\right\Vert ^{2}+\frac{\ell ^{2}}{4}%
\left\Vert H\right\Vert ^{2}.  \label{eq-RIC-HV-CSF}
\end{align}
\end{theorem}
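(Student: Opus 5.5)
The plan is to treat this exactly like the real space form case: substitute the complex space form curvature quantities into the two general inequalities already proved. Both (\ref{eq-RIC-V-CSF}) and (\ref{eq-RIC-HV-CSF}) are pointwise statements. Their right-hand sides (respectively, the lower-bound sides) involve only the O'Neill tensors $T$, $A$, the mean curvature $H$, $\delta(N)$ and the intrinsic curvatures of the distributions. Hence the only ambient input is the value of ${\rm Ric}^{N_1}$ restricted to the relevant directions. By the previous Proposition on complex space forms, these values are given by (\ref{eq-CSF-Ric-1}), (\ref{eq-CSF-Ric-2}) and (\ref{eq-CSF-Ric-3}).

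For (\ref{eq-RIC-V-CSF}), I start from Theorem~\ref{Theorem 1}, i.e.\ (\ref{eq-GINQ-RS-V}), at a point $p$. I take $X=v_1$ as the first vector of an orthonormal vertical basis; this is allowed because the proof of Theorem~\ref{Theorem 1} permits any unit vertical $X$ to be completed to such a basis. The ambient term in (\ref{eq-GINQ-RS-V}) is the vertical Ricci sum $\sum_j R^{N_1}(v_1,v_j,v_j,v_1)$ coming from (\ref{eq-ver-1}), i.e.\ ${\rm Ric}^{N_1}_{\mathcal V}(v_1)$. Replacing it by $\frac{c}{4}(\ell-1)+\frac{3c}{4}\|Qv_1\|^2$ from (\ref{eq-CSF-Ric-1}) gives the first inequality directly. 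I will briefly recheck (\ref{eq-CSF-Ric-1}) from (\ref{eq-GCSF}), since the constant is the point where an error could enter. With $Z_1=Z_3=v_1$ and $Z_2=v_j$, the first bracket contributes $\frac{c}{4}$ for $j\neq1$. The $J$-terms contribute $\frac{3c}{4}g(v_1,Jv_j)^2$. Summing over $j$ gives $\frac{3c}{4}\sum_j g(Qv_1,v_j)^2=\frac{3c}{4}\|Qv_1\|^2$.

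For (\ref{eq-RIC-HV-CSF}), I start from (\ref{eq-GINQ-RS-HV}). Its left-hand side is ${\rm Ric}^{N_1}_{\mathcal V}(v_1)+{\rm Ric}^{N_1}_{\mathcal H}(h_1)+\sum R^{N_1}(h_i,v_j,v_j,h_i)$. I substitute (\ref{eq-CSF-Ric-1}), (\ref{eq-CSF-Ric-2}) and (\ref{eq-CSF-Ric-3}) into these three terms and add the constants:
\[
\frac{c}{4}\bigl((\ell-1)+(r-1)+\ell r\bigr)=\frac{c}{4}(\ell r+\ell+r-2).
\]
The $\frac{3c}{4}$-terms collect into $\frac{3c}{4}(\|Qv_1\|^2+\|Ph_1\|^2+\|P^{\mathcal V}\|^2)$. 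The right-hand side of (\ref{eq-GINQ-RS-HV}) is unchanged, and this yields the claim. The equality cases are inherited verbatim from Theorem~\ref{Theorem 1} and the theorem of Section~4, since the substitution is an identity.

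The only delicate point I expect is bookkeeping: the mixed-term identity (\ref{eq-CSF-Ric-3}) and the meaning of $\|P^{\mathcal V}\|^2$. For each pair $(h_i,v_j)$, the formula (\ref{eq-GCSF}) gives $\frac{c}{4}+\frac{3c}{4}g(h_i,Jv_j)^2$. Summing gives $\frac{c}{4}\ell r+\frac{3c}{4}\sum_{i,j}g(Jv_j,h_i)^2$, and the latter sum is what $\|P^{\mathcal V}\|^2$ denotes: the horizontal part of $J$ applied to vertical vectors. I would state this convention explicitly and then conclude, exactly as in the proof for real space forms.
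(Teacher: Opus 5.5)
Your route is exactly the paper's: substitute the complex-space-form values of the ambient Ricci terms into (\ref{eq-GINQ-RS-V}) and (\ref{eq-GINQ-RS-HV}). The paper cites only (\ref{eq-CSF-Ric-3}) for the second inequality; you correctly use all three identities, though the $\frac{1}{2}\frac{r-1}{r}(\cdots)$ term of (\ref{eq-GINQ-RS-HV}) has to be read with $\ell$ in place of $r$, as in the last display of its proof. However, the substitution has a genuine gap, inherited from the paper: it mixes two opposite sign conventions.

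Equations (\ref{eq-(2.3)})--(\ref{eq-(2.5)}) are O'Neill's formulas in O'Neill's convention $R_{XY}=\nabla_{[X,Y]}-[\nabla_X,\nabla_Y]$, with $R^{N_1}(X,Y,Z,W)=g_1(R_{XY}Z,W)$, so that $R^{N_1}(X,Y,Y,X)=-K(X,Y)$. Hence (\ref{eq-ver-1}) and Theorem~\ref{Theorem 1} are valid only if every ${\rm Ric}$ in them means $\sum_j R^{N_1}(v_1,v_j,v_j,v_1)=-\sum_j K(v_1,v_j)$. Your recheck of (\ref{eq-CSF-Ric-1}) instead computes $+\sum_j K(v_1,v_j)$ for holomorphic sectional curvature $c$. (The value $+\frac{c}{4}$ comes from $g(R(v_1,v_j)v_j,v_1)$; pairing $R(v_1,v_j)v_1$ with $v_j$, as your choice $Z_1=Z_3=v_1$ suggests, gives $-\frac{c}{4}$.) In one consistent convention, the Gauss equation of the fibres is ${\rm Ric}^{\ker F_*}(v_1)={\rm Ric}^{N_1}_{\mathcal{V}}(v_1)+\sum_\alpha\bigl(T^{\alpha}_{11}\sum_j T^{\alpha}_{jj}-\sum_j (T^{\alpha}_{1j})^2\bigr)$, with the sign opposite to (\ref{eq-ver-1}). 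Lemma~\ref{Hineva} plus Cauchy--Schwarz then give ${\rm Ric}^{\ker F_*}(X)\ge \frac{c}{4}(\ell-1)+\frac{3c}{4}\|QX\|^2+\Phi$, where $\Phi$ is the term $\frac{\ell-1}{\ell}(\cdots)$ of (\ref{eq-RIC-V-CSF}).

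In fact (\ref{eq-RIC-V-CSF}) is false when $c$ is the holomorphic sectional curvature. With the usual sign of ${\rm Ric}$, take $F(x)=|x|$ on $\mathbb{C}^2\setminus\{0\}$, so $c=0$ and $\ell=3$. The fibres are round spheres $S^3(\rho)$ with $\|H\|^2=\rho^{-2}$ and $\|T^{\mathcal{H}}\|^2=3\rho^{-2}$. The right-hand side equals $-2\rho^{-2}$, while ${\rm Ric}^{\ker F_*}(X)=2\rho^{-2}$.

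Reading ${\rm Ric}$ with O'Neill's sign does not rescue the statement for $c<0$. The Busemann function of $\mathbb{CH}^2$ with $c=-4$ is a Riemannian submersion onto $\mathbb{R}$ whose fibres are horospheres with principal curvatures $2,1,1$. At $X=\xi=-JN$, with $N$ the unit horizontal vector, one has $Q\xi=0$, $|{\rm Ric}^{\ker F_*}(\xi)|=2$ and $\Phi=\frac{20}{9}$. So the right-hand side is $-2-\frac{20}{9}$, and the inequality fails in either convention.

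The substitution into (\ref{eq-GINQ-RS-HV}) is invalid for the same reason, since that inequality comes from (\ref{eq-scalN1}), which is written in the same O'Neill sign as (\ref{eq-(2.3)})--(\ref{eq-(2.5)}). So the failing step is your claim that the ambient input of (\ref{eq-GINQ-RS-V}) is given by (\ref{eq-CSF-Ric-1}). In the convention of Theorem~\ref{Theorem 1} it is the negative of that, and the statement as written cannot be proved.
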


\begin{proof}
By using (\ref{eq-CSF-Ric-1}) into (\ref{eq-GINQ-RS-V}%
), (\ref{eq-CSF-Ric-3}) into (\ref{eq-GINQ-RS-HV}), we get (\ref%
{eq-RIC-V-CSF}), (\ref{eq-RIC-HV-CSF}), respectively.
\end{proof}

\section{Riemannian Maps}

\subsection{Background}

\label{sec_2} %%%%%%%%%%%%%%%%%%%%%%%%%%%%%%%%%%%%%
In this subsection, we collect some basic information that is needed for the
remaining subsections.\newline

Let $F:(N_{1}^{m_{1}},g_{1})\rightarrow (N_{2}^{m_{2}},g_{2})$ be a smooth
map between Riemannian manifolds with rank $r<\min \{m_{1},m_{2}\}$, and let 
$F_{\ast p}:T_{p}N_{1}\rightarrow T_{F(p)}N_{2}$ be its derivative map at $p$%
. Denoting the kernel (resp. range) space of $F_{\ast }$ by $(\ker F_{\ast
p})$ (resp. $({\rm range~}F_{\ast p})$) and its orthogonal complementary
space by $(\ker F_{\ast p})^{\perp }$ (resp. $({\rm range~}F_{\ast
p})^{\perp }$), we decompose 
\[
T_{p}N_{1}=(\ker F_{\ast p})\oplus (\ker F_{\ast p})^{\perp }~{\rm and~}%
T_{F(p)}N_{2}=({\rm range~}F_{\ast p})\oplus ({\rm range~}F_{\ast p})^{\perp
}. 
\]%
The map $F$ is called a {\it Riemannian map}, if for all $X,Y\in \Gamma
(\ker F_{\ast })^{\perp }$, we have \cite{Fischer_1992} 
\begin{equation}
g_{1}(X,Y)=g_{2}(F_{\ast }X,F_{\ast }Y).  \label{eq-(1)}
\end{equation}

\subsubsection*{Second fundamental form}

\noindent The bundle {\rm Hom}$\left( T{N_{1}},F^{-1}T{N_{2}}\right) $
admits an induced connection $\nabla $ from the Levi-Civita connection $%
\nabla ^{1}$ on {$N$}${_{1}}$. The symmetric {\it second fundamental form}
of $F$ is then given by \cite{Nore_1986} 
\[
\left( \nabla F_{\ast }\right) \left( {\cal Z}_{1},{\cal Z}_{2}\right)
=\nabla _{{\cal Z}_{1}}^{F}F_{\ast }({\cal Z}_{2})-F_{\ast }\left( \nabla _{%
{\cal Z}_{1}}^{{1}}{\cal Z}_{2}\right) 
\]%
for ${\cal Z}_{1},{\cal Z}_{2}\in \Gamma (T${$N$}${_{1}})$, where $\nabla
^{F}$ is the pull-back connection. In addition, by \cite{Sahin_2010} $\left(
\nabla F_{\ast }\right) \left( X,Y\right) $ is completely contained in $%
\left( {\rm range~}F_{\ast }\right) ^{\perp }~{\rm for\ all}\ X,Y\in \Gamma
\left( \ker F_{\ast }\right) ^{\perp }$.

\subsubsection*{Gauss equation}

The {\it Gauss equation} for $F$ is defined as \cite[p. 189]{Sahin_book} 
\begin{eqnarray}
g_{2}\left( R^{{N_{2}}}\left( F_{\ast }Z_{1},F_{\ast }Z_{2}\right) F_{\ast
}Z_{3},F_{\ast }Z_{4}\right) &=&g_{1}\left( R^{{N_{1}}}\left(
Z_{1},Z_{2}\right) Z_{3},Z_{4}\right)  \nonumber \\
&&+g_{2}\left( \left( \nabla F_{\ast }\right) \left( Z_{1},Z_{3}\right)
,\left( \nabla F_{\ast }\right) \left( Z_{2},Z_{4}\right) \right)  \nonumber
\\
&&-g_{2}\left( \left( \nabla F_{\ast }\right) \left( Z_{1},Z_{4}\right)
,\left( \nabla F_{\ast }\right) \left( Z_{2},Z_{3}\right) \right) ,
\label{Gauss_Eqn}
\end{eqnarray}%
where $Z_{i}\in \Gamma \left( \ker F_{\ast }\right) ^{\perp }$. Here, $%
R^{N_{1}}$ and $R^{N_{2}}$ denote the curvature tensors of {$N$}${_{1}}$ and 
{$N$}${_{2}}$, respectively.

\subsubsection*{Some notations}

At a point $p\in ${$N$}${_{1}}$, suppose that $\{h_{i}\}_{i=1}^{r}$ and $%
\{h_{i}\}_{i=r+1}^{m_{1}}$ are orthonormal bases of the horizontal space
(that is, $(\ker F_{\ast })^{\perp }$) and the vertical space (that is, $%
(\ker F_{\ast })$), respectively. Then the scalar curvatures $\tau ^{{\cal H}%
}$ and $\tau ^{{\cal R}}$ on the horizontal and range spaces are given,
respectively, by 
\[
\tau ^{{\cal H}}=\sum\limits_{1\leq i<j\leq r}g_{1}\left( R^{{N_{1}}%
}(h_{i},h_{j})h_{j},h_{i}\right) ,\quad \tau ^{{\cal R}}=\sum\limits_{1\leq
i<j\leq r}g_{2}\left( R^{{N_{2}}}(F_{\ast }h_{i},F_{\ast }h_{j})F_{\ast
}h_{j},F_{\ast }h_{i}\right) . 
\]%
In addition, for a fixed $j$, the Ricci curvatures of $h_{j}$ and $F_{\ast
}h_{j}$, denoted by ${\rm Ric}(h_{j})$ and ${\rm Ric}(F_{\ast }h_{j})$,
respectively, are defined by 
\begin{equation}
{\rm Ric}^{{\cal H}}(h_{j})=\sum_{j=1}^{r}g_{1}\left( R^{{N_{1}}%
}(h_{1},h_{j})h_{j},h_{1}\right) ,\ {\rm Ric}^{{\cal R}}(F_{\ast
}h_{1})=\sum_{j=1}^{r}g_{2}\left( R^{{N_{2}}}(F_{\ast }h_{1},F_{\ast
}h_{j})F_{\ast }h_{j},F_{\ast }h_{1}\right) .  \label{Ricci_Curvatures}
\end{equation}%
Supposing $\{V_{r+1},\dots ,V_{m_{2}}\}$ an orthonormal basis of $\left( 
{\rm range~}F_{\ast }\right) ^{\perp }$ we set, 
\begin{eqnarray*}
B_{ij}^{{\cal H}^{\alpha }} &=&g_{2}\left( (\nabla F_{\ast
})(h_{i},h_{j}),V_{\alpha }\right) ,\quad 1\leq i,j\leq r,\quad r+1\leq
\alpha \leq m_{2}, \\
\left\Vert B^{{\cal H}}\right\Vert ^{2} &=&\sum_{i,j=1}^{r}g_{2}\left(
(\nabla F_{\ast })(h_{i},h_{j}),(\nabla F_{\ast })(h_{i},h_{j})\right) \\
{\rm trace\,}\left( B^{{\cal H}^{\alpha }}\right) &=&\sum_{i=1}^{r}B_{ii}^{%
{\cal H}^{\alpha }},\ {\rm trace\,}\left( B^{{\cal H}^{\alpha }}\right)
^{2}=\sum_{i,j=1}^{r}\left( B_{ij}^{{\cal H}^{\alpha }}\right) ^{2},\quad
\alpha \in \left\{ r+1,\ldots ,m_{2}\right\} \\
{\rm trace\,}B^{{\cal H}} &=&\sum_{i=1}^{r}(\nabla F_{\ast })\left(
h_{i},h_{i}\right) ,\quad {\rm and\quad }\left\Vert {\rm trace\,}B^{{\cal H}%
}\right\Vert ^{2}=g_{2}\left( {\rm trace\,}B^{{\cal H}},{\rm trace\,}B^{%
{\cal H}}\right) .
\end{eqnarray*}

\subsection{General Lower Bound for the Ricci Curvature for Riemannian maps}

\begin{theorem}
\label{main_thm_CRI} Let $F:\left( N_{1}^{m_{1}},g_{1}\right) \rightarrow
\left( N_{2}^{m_{2}},g_{2}\right) $ be a Riemannian map between Riemannian
manifolds with rank $r<m_{2}$. Then for any unit vector $X\in \Gamma (\ker
F_{\ast })^{\perp }$, we have 
\begin{eqnarray}
{\rm Ric}^{{\cal H}}\left( h_{1}\right)  &\geq &{\rm Ric}^{{\cal R}}\left(
F_{\ast }h_{1}\right) +\frac{r-1}{r^{2}}\left( 2\left\Vert {\rm trace}B^{%
{\cal H}}\right\Vert ^{2}-r\left\Vert B^{{\cal H}}\right\Vert ^{2}\right.  
\nonumber \\
&&\left. -\left( r-2\right) \left\Vert {\rm trace}B^{{\cal H}}\right\Vert 
\sqrt{\frac{r\left\Vert B^{{\cal H}}\right\Vert ^{2}-\left\Vert {\rm trace}%
B^{{\cal H}}\right\Vert ^{2}}{r-1}}\right)   \label{eq-GINQ-RM}
\end{eqnarray}%
The equality holds in (\ref{eq-GINQ-RM}) if and only if the matrices $\left(
B_{ij}^{{\cal H}^{\alpha }}\right) $, with respect to the orthonormal bases $%
\left\{ h_{1},\dots ,h_{r}\right\} $, $\{F_{\ast }h_{1},\dots ,F_{\ast
}h_{r}\}$ and $\{V_{r+1},\dots ,V_{m_{2}}\}$ of $(\ker F_{\ast p})^{\perp }$%
, $({\rm range~}~F_{\ast p})$, and $({\rm range~}~F_{\ast p})^{\perp }$,
respectively, are of the following form:%
\[
B_{ij}^{{\cal H}^{\alpha }}=\left( 
\begin{array}{ccccc}
a_{\alpha } & 0 & \cdots  & 0 & 0 \\ 
0 & b_{\alpha } & \cdots  & 0 & 0 \\ 
\vdots  & \vdots  & \ddots  & \vdots  & 0 \\ 
0 & 0 & \cdots  & b_{\alpha } & 0 \\ 
0 & 0 & \cdots  & 0 & b_{\alpha }%
\end{array}%
\right) 
\]%
and for the matrices%
\[
\frac{\left\vert a_{\alpha }-b_{\alpha }\right\vert }{a_{\alpha }+\left(
r-1\right) b_{\alpha }}{\rm is\ invariant\ over\ }\alpha \in \left\{
r+1,\ldots ,m_{2}\right\} 
\]%
where $a_{\alpha }$ and $b_{\alpha }$ are two different eigen value of each
one of the matrices $\left( B_{ij}^{{\cal H}^{\alpha }}\right) $ of the
symmetric $\left( 1,2\right) $ tensor field $B^{{\cal H}}$ such that 
\begin{eqnarray*}
a_{\alpha } &=&\frac{{\rm trace}\left( B^{{\cal H}^{\alpha }}\right) }{r}%
\overline{+}\frac{r-1}{r}\sqrt{\frac{\ell {\rm trace}\left( B^{{\cal H}%
^{\alpha }}\right) ^{2}-\left( {\rm trace}\left( B^{{\cal H}^{\alpha
}}\right) \right) ^{2}}{r-1}} \\
b_{\alpha } &=&\frac{{\rm trace}\left( B^{{\cal H}^{\alpha }}\right) }{r}%
\underline{+}\frac{1}{r}\sqrt{\frac{\ell {\rm trace}\left( B^{{\cal H}%
^{\alpha }}\right) ^{2}-\left( {\rm trace}\left( B^{{\cal H}^{\alpha
}}\right) \right) ^{2}}{r-1}}
\end{eqnarray*}%
The equality conditions can be interpreted as follows. We observe that $%
g_{2}\left( F_{\ast }h_{1},S_{V_{\alpha }}F_{\ast }h_{1}\right) \neq $ $%
g_{2}\left( F_{\ast }h_{2},S_{V_{\alpha }}F_{\ast }h_{2}\right) =\cdots
=g_{2}\left( F_{\ast }h_{r-1},S_{V_{\alpha }}F_{\ast }h_{r-1}\right)
=g_{2}\left( F_{\ast }h_{r},S_{V_{\alpha }}F_{\ast }h_{r}\right) $ with
respect to all directions $(V_{\alpha },\text{where}~\alpha \in \{r+1,\dots
,m_{2}\})$. Equivalently, there exist $(m_{2}-r)$ mutually orthogonal unit
vector fields in $\left( {\rm range~}F_{\ast }\right) ^{\perp }$ such that
shape operators with respect to all directions have an eigenvalue of
multiplicity $(r-1)$ and that for each $V_{\alpha }$ the distinguished
eigendirections are the same (namely $F_{\ast }Z_{1}$). Hence, the leaves of
range spaces are invariantly quasi-umbilical $\cite{DHV_2008}$.
\end{theorem}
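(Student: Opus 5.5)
The proof follows the same route as Theorem \ref{Theorem 1}, with the Gauss equation (\ref{Gauss_Eqn}) for Riemannian maps replacing the O'Neill relation (\ref{eq-(2.3)}). Fix $p\in N_{1}$ and orthonormal bases $\{h_{1},\dots ,h_{r}\}$ of $(\ker F_{\ast p})^{\perp }$, with $h_{1}=X$, and $\{V_{r+1},\dots ,V_{m_{2}}\}$ of $({\rm range}~F_{\ast p})^{\perp }$. Since $F$ is a Riemannian map, $\{F_{\ast }h_{i}\}$ is an orthonormal basis of ${\rm range}~F_{\ast p}$, and $(\nabla F_{\ast })(h_{i},h_{j})=\sum_{\alpha }B_{ij}^{{\cal H}^{\alpha }}V_{\alpha }$.

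\textbf{Step 1 (exact identity).} Put $Z_{1}=Z_{4}=h_{1}$ and $Z_{2}=Z_{3}=h_{j}$ in (\ref{Gauss_Eqn}) and sum over $j$. Using (\ref{Ricci_Curvatures}) this gives
\begin{equation*}
{\rm Ric}^{{\cal H}}(h_{1})={\rm Ric}^{{\cal R}}(F_{\ast }h_{1})+\sum_{\alpha =r+1}^{m_{2}}\Big(B_{11}^{{\cal H}^{\alpha }}\sum_{j=1}^{r}B_{jj}^{{\cal H}^{\alpha }}-\sum_{j=1}^{r}(B_{1j}^{{\cal H}^{\alpha }})^{2}\Big).
\end{equation*}
The sign convention in (\ref{Gauss_Eqn}) has the $R^{N_{2}}$ side on the left. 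So, compared with (\ref{eq-ver-1}), the quadratic term now carries a plus sign, and this is what turns the inequality into a lower bound.

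\textbf{Step 2 (Lemma \ref{Hineva} fibrewise).} For each $\alpha$, apply Lemma \ref{Hineva} with $n=r$ to the symmetric matrix $(B_{ij}^{{\cal H}^{\alpha }})$, taking $a_{\alpha }={\rm trace}\,B^{{\cal H}^{\alpha }}$ and $b_{\alpha }={\rm trace}(B^{{\cal H}^{\alpha }})^{2}$ (the lemma's ``$b$'' enters as a squared norm, as in its use in Theorem \ref{Theorem 1}). Summing over $\alpha$ gives the lower bound
\begin{equation*}
\frac{r-1}{r^{2}}\Big(2\sum_{\alpha }a_{\alpha }^{2}-r\sum_{\alpha }b_{\alpha }-(r-2)\sum_{\alpha }|a_{\alpha }|\sqrt{\tfrac{rb_{\alpha }-a_{\alpha }^{2}}{r-1}}\Big).
\end{equation*}
Here $\sum_{\alpha }a_{\alpha }^{2}=\Vert {\rm trace}\,B^{{\cal H}}\Vert ^{2}$ and $\sum_{\alpha }b_{\alpha }=\Vert B^{{\cal H}}\Vert ^{2}$.

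\textbf{Step 3 (Cauchy--Schwarz, the delicate step).} The mixed term must be bounded above so that the overall lower bound survives. Apply Cauchy--Schwarz to the vectors $(|a_{\alpha }|)$ and $(\sqrt{(rb_{\alpha }-a_{\alpha }^{2})/(r-1)})$:
\begin{equation*}
\sum_{\alpha }|a_{\alpha }|\sqrt{\tfrac{rb_{\alpha }-a_{\alpha }^{2}}{r-1}}\leq \Vert {\rm trace}\,B^{{\cal H}}\Vert \sqrt{\tfrac{r\Vert B^{{\cal H}}\Vert ^{2}-\Vert {\rm trace}\,B^{{\cal H}}\Vert ^{2}}{r-1}}.
\end{equation*}
This is the analogue of (\ref{Cauchy vert}). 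Since the term enters with the factor $-(r-2)\le 0$, it preserves the direction of the inequality. Substituting into Step 1 gives (\ref{eq-GINQ-RM}), and since $h_{1}=X$ was an arbitrary unit horizontal vector, the inequality holds for every $X$. For $r=2$ the mixed term vanishes and Step 3 is not needed.

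\textbf{Step 4 (equality).} Equality holds exactly when it holds in Lemma \ref{Hineva} for every $\alpha$ and in Cauchy--Schwarz. The lemma forces each $(B_{ij}^{{\cal H}^{\alpha }})$ to be ${\rm diag}(a_{\alpha },b_{\alpha },\dots ,b_{\alpha })$ with the stated values. Cauchy--Schwarz forces the vectors $(|a_{\alpha }|)$ and $(\sqrt{rb_{\alpha }-a_{\alpha }^{2}})$ to be proportional. For a matrix of that diagonal form, $\sqrt{(rb_{\alpha }-a_{\alpha }^{2})/(r-1)}$ is a fixed multiple of $|a_{\alpha }-b_{\alpha }|$, so proportionality is exactly the invariance of $|a_{\alpha }-b_{\alpha }|/(a_{\alpha }+(r-1)b_{\alpha })$ over $\alpha$. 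The geometric reading follows because $g_{2}(S_{V_{\alpha }}F_{\ast }h_{i},F_{\ast }h_{j})=B_{ij}^{{\cal H}^{\alpha }}$.

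The main obstacle is keeping track of signs: the Gauss-equation convention in Step 1, the orientation of the Cauchy--Schwarz estimate in Step 3, and the case $a_{\alpha }=0$ in the equality analysis. I would check each of these explicitly.
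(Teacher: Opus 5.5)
Your proposal is correct and follows the paper's proof step for step. The paper likewise sets $Z_1=Z_4=h_1$, $Z_2=Z_3=h_j$ in (\ref{Gauss_Eqn}) to get the exact identity (\ref{eq-Ric-RM-1}), applies Lemma~\ref{Hineva} to each $(B_{ij}^{{\cal H}^{\alpha}})$, bounds the mixed term by Cauchy--Schwarz, and reads off equality from equality in both estimates.

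Two caveats, both inherited from the statement itself: your identity $\sqrt{(r\,{\rm trace}(B^{{\cal H}^{\alpha}})^{2}-({\rm trace}\,B^{{\cal H}^{\alpha}})^{2})/(r-1)}=|a_{\alpha}-b_{\alpha}|$ for the diagonal form actually yields invariance of $|a_{\alpha}-b_{\alpha}|/|a_{\alpha}+(r-1)b_{\alpha}|$, with an absolute value in the denominator. Also, for $r=2$ equality holds for every $B^{{\cal H}}$, so the matrix characterisation of the equality case genuinely needs $r\geq 3$.
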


\begin{proof}
At a point $p\in ${$N$}${_{1}}$, let $\left\{
h_{1},\dots ,h_{r}\right\} $, $\{F_{\ast }h_{1},\dots ,F_{\ast }h_{r}\}$ and 
$\{V_{r+1},\dots ,V_{m_{2}}\}$ be orthonormal bases for $(\ker F_{\ast
p})^{\perp }$, $({\rm range~}~F_{\ast p})$, and $({\rm range~}~F_{\ast
p})^{\perp }$, respectively. Then by putting $Z_{1}=Z_{4}=h_{i}$ and $%
Z_{2}=Z_{3}=h_{j}$ in (\ref{Gauss_Eqn}), and using (\ref{Ricci_Curvatures}),
we obtain 
\begin{equation}
{\rm Ric}^{{\cal H}}\left( h_{1}\right) ={\rm Ric}^{{\cal R}}\left( F_{\ast
}h_{1}\right) +\sum_{\alpha =1}^{s}\left( B_{11}^{{\cal H}^{\alpha
}}\sum_{j=1}^{r}B_{jj}^{{\cal H}^{\alpha }}-\sum_{j=1}^{r}\left( B_{1j}^{%
{\cal H}^{\alpha }}\right) ^{2}\right) .  \label{eq-Ric-RM-1}
\end{equation}%
Using Lemma \ref{Hineva}, we have%
\begin{eqnarray}
&&B_{11}^{{\cal H}^{\alpha }}\sum_{j=1}^{r}B_{jj}^{{\cal H}^{\alpha
}}-\sum_{j=1}^{r}\left( B_{1j}^{{\cal H}^{\alpha }}\right) ^{2}  \nonumber \\
&\geq &\frac{r-1}{r^{2}}\left( 2\sum_{\alpha =1}^{s}\left(
\sum_{j=1}^{r}B_{jj}^{{\cal H}^{\alpha }}\right) ^{2}-r\sum_{\alpha
=1}^{s}\sum_{i,j=1}^{r}\left( B_{ij}^{{\cal H}^{\alpha }}\right) ^{2}\right.
\nonumber \\
&&\left. -\left( r-2\right) \sum_{\alpha =1}^{s}\left\vert
\sum_{j=1}^{r}B_{jj}^{{\cal H}^{\alpha }}\right\vert \sqrt{\frac{%
r\sum_{i,j=1}^{r}\left( B_{ij}^{{\cal H}^{\alpha }}\right) ^{2}-\left(
\sum_{j=1}^{r}B_{jj}^{{\cal H}^{\alpha }}\right) ^{2}}{r-1}}\right\} .
\label{eq-Lemma-RM}
\end{eqnarray}%
Using (\ref{eq-Lemma-RM}) into (\ref{eq-Ric-RM-1}), we get 
\begin{eqnarray}
{\rm Ric}^{{\cal H}}\left( h_{1}\right) &\geq &{\rm Ric}^{{\cal R}}\left(
F_{\ast }h_{1}\right) +\frac{r-1}{r^{2}}\left( 2\sum_{\alpha =1}^{s}\left(
\sum_{j=1}^{r}B_{jj}^{{\cal H}^{\alpha }}\right) ^{2}-r\sum_{\alpha
=1}^{s}\sum_{i,j=1}^{r}\left( B_{ij}^{{\cal H}^{\alpha }}\right) ^{2}\right.
\nonumber \\
&&\left. -\left( r-2\right) \sum_{\alpha =1}^{s}\left\vert
\sum_{j=1}^{r}B_{jj}^{{\cal H}^{\alpha }}\right\vert \sqrt{\frac{%
r\sum_{i,j=1}^{r}\left( B_{ij}^{{\cal H}^{\alpha }}\right) ^{2}-\left(
\sum_{j=1}^{r}B_{jj}^{{\cal H}^{\alpha }}\right) ^{2}}{r-1}}\right\}
\label{eq-Ric-RM-2}
\end{eqnarray}%
By using Cauchy-Schwartz inequality, we have%
\begin{eqnarray}
&&\sum_{\alpha =1}^{s}\left\vert \sum_{j=1}^{r}B_{jj}^{{\cal H}^{\alpha
}}\right\vert \sqrt{\frac{r\sum_{i,j=1}^{r}\left( B_{ij}^{{\cal H}^{\alpha
}}\right) ^{2}-\left( \sum_{j=1}^{r}B_{jj}^{{\cal H}^{\alpha }}\right) ^{2}}{%
r-1}}  \nonumber \\
&\leq &\left\Vert {\rm trace}B^{{\cal H}}\right\Vert \sqrt{\frac{r\left\Vert
B^{{\cal H}}\right\Vert ^{2}-\left\Vert {\rm trace}B^{{\cal H}}\right\Vert
^{2}}{r-1}}  \label{eq-cauch-squartz-RM}
\end{eqnarray}%
Using (\ref{eq-cauch-squartz-RM}) into (\ref{eq-Ric-RM-2}), we get%
\begin{eqnarray*}
{\rm Ric}^{{\cal H}}\left( h_{1}\right) &\geq &{\rm Ric}^{{\cal R}}\left(
F_{\ast }h_{1}\right) +\frac{r-1}{r^{2}}\left( 2\left\Vert {\rm trace}B^{%
{\cal H}}\right\Vert ^{2}-r\left\Vert B^{{\cal H}}\right\Vert ^{2}\right. \\
&&\left. -\left( r-2\right) \left\Vert {\rm trace}B^{{\cal H}}\right\Vert 
\sqrt{\frac{r\left\Vert B^{{\cal H}}\right\Vert ^{2}-\left\Vert {\rm trace}%
B^{{\cal H}}\right\Vert ^{2}}{r-1}}\right)
\end{eqnarray*}%
The equality holds if it holds in (\ref{eq-Lemma-RM}) and (\ref%
{eq-cauch-squartz-RM}), we get required result.
\end{proof}

\subsection{Applications}

\begin{proposition}
Let $F:\left( N_{1},g_{1}\right) \rightarrow \left( N_{2},g_{2}\right) $ be
a Riemannian map a Riemannian manifold to real space form with $\dim
N_{1}=m_{1}$ and $\dim N_{2}=m_{2}$. Assume that $\left\{ h_{1},\ldots
,h_{r}\right\} $, $\left\{ F_{\ast }h_{1},\ldots ,F_{\ast }h_{r}\right\} $
and $\left\{ V_{r+1},\ldots ,V_{m_{2}}\right\} $ be bases of ${\cal H}_{p}$, 
$\left( {\rm range}F_{\ast }\right) $ and $\left( {\rm range}F_{\ast
}\right) ^{\perp }$, respectively. Then, by using (\ref{Ricci_Curvatures})
and (\ref{eq-RSF}), we get%
\begin{equation}
{\rm Ric}^{{\cal R}}(F_{\ast }h_{1})=c\left( r-1\right)
\label{eq-Ric-RSF-RM}
\end{equation}
\end{proposition}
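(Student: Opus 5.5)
The statement is a direct evaluation, so the plan is to substitute the curvature tensor of the real space form $N_{2}(c)$, given by (\ref{eq-RSF}) with $g=g_{2}$, into the definition (\ref{Ricci_Curvatures}) of ${\rm Ric}^{{\cal R}}(F_{\ast }h_{1})$. The only structural input is that $\{F_{\ast }h_{1},\ldots ,F_{\ast }h_{r}\}$ is an orthonormal basis of $({\rm range}\,F_{\ast p})$. This holds because $F$ is a Riemannian map: by (\ref{eq-(1)}),
\[
g_{2}(F_{\ast }h_{i},F_{\ast }h_{j})=g_{1}(h_{i},h_{j})=\delta _{ij}.
\]

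For each $j$, apply (\ref{eq-RSF}) with ${\cal Z}_{1}=F_{\ast }h_{1}$ and ${\cal Z}_{2}={\cal Z}_{3}=F_{\ast }h_{j}$, then pair with $F_{\ast }h_{1}$. This gives
\[
g_{2}\left( R^{N_{2}}(F_{\ast }h_{1},F_{\ast }h_{j})F_{\ast }h_{j},F_{\ast }h_{1}\right) =c\left\{ g_{2}(F_{\ast }h_{j},F_{\ast }h_{j})\,g_{2}(F_{\ast }h_{1},F_{\ast }h_{1})-g_{2}(F_{\ast }h_{1},F_{\ast }h_{j})^{2}\right\} ,
\]
and by orthonormality the right-hand side equals $c(1-\delta _{1j})$.

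Summing over $j=1,\ldots ,r$, the term $j=1$ vanishes and each of the remaining $r-1$ terms contributes $c$. Hence ${\rm Ric}^{{\cal R}}(F_{\ast }h_{1})=c(r-1)$, which is (\ref{eq-Ric-RSF-RM}).

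There is no real obstacle. The one point needing care is to use the isometry property (\ref{eq-(1)}) on $(\ker F_{\ast })^{\perp }$ to justify orthonormality of the $F_{\ast }h_{i}$ in $N_{2}$; this replaces the role the metric $g$ played in (\ref{eq-RSF}).
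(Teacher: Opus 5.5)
Your proposal is correct and is the same argument as the paper's. The paper simply asserts the result ``by using (\ref{Ricci_Curvatures}) and (\ref{eq-RSF})''. You carry out that substitution explicitly and correctly justify the orthonormality of $\{F_{\ast}h_{1},\ldots,F_{\ast}h_{r}\}$ via the Riemannian map condition (\ref{eq-(1)}), which the paper leaves implicit.
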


\begin{proposition}
Let $F:\left( N_{1},g_{1}\right) \rightarrow \left( N_{2},g_{2}\right) $ be
a Riemannian map a Riemannian manifold to complex space form with $\dim
N_{1}=m_{1}$ and $\dim N_{2}=m_{2}$. Assume that $\left\{ h_{1},\ldots
,h_{r}\right\} $, $\left\{ F_{\ast }h_{1},\ldots ,F_{\ast }h_{r}\right\} $
and $\left\{ V_{r+1},\ldots ,V_{m_{2}}\right\} $ be bases of ${\cal H}_{p}$, 
$\left( {\rm range}F_{\ast }\right) $ and $\left( {\rm range}F_{\ast
}\right) ^{\perp }$, respectively. Then, by using (\ref{Ricci_Curvatures})
and (\ref{eq-GCSF}), we get%
\begin{equation}
{\rm Ric}^{{\cal R}}(F_{\ast }h_{1})=\frac{c}{4}\left( r-1\right) +\frac{3c}{%
4}\left\Vert PF_{\ast }h_{1}\right\Vert ^{2}  \label{eq-Ric-CSF-RM}
\end{equation}
\end{proposition}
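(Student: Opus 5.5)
The statement just above is the proposition giving ${\rm Ric}^{{\cal R}}(F_{\ast }h_{1})$ for a Riemannian map into a complex space form. I would prove it by direct substitution of the curvature tensor (\ref{eq-GCSF}) of $N_{2}$ into the definition (\ref{Ricci_Curvatures}). Fix the orthonormal basis $\{F_{\ast }h_{1},\ldots ,F_{\ast }h_{r}\}$ of ${\rm range}F_{\ast }$, which is orthonormal because $F$ is a Riemannian map (\ref{eq-(1)}). For each $j$, put ${\cal Z}_{1}=F_{\ast }h_{1}$ and ${\cal Z}_{2}={\cal Z}_{3}=F_{\ast }h_{j}$ in (\ref{eq-GCSF}), pair the result with $F_{\ast }h_{1}$, and sum over $j=1,\ldots ,r$.

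\textbf{Real part.} The first bracket gives $\frac{c}{4}\{g_{2}(F_{\ast }h_{j},F_{\ast }h_{j})g_{2}(F_{\ast }h_{1},F_{\ast }h_{1})-g_{2}(F_{\ast }h_{1},F_{\ast }h_{j})^{2}\}$. This equals $\frac{c}{4}(1-\delta _{1j})$, so summing over $j$ yields $\frac{c}{4}(r-1)$.

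\textbf{Holomorphic part.} The second bracket gives three terms.
\begin{itemize}
\item The term $g_{2}(F_{\ast }h_{1},JF_{\ast }h_{j})\,g_{2}(JF_{\ast }h_{j},F_{\ast }h_{1})$ equals $g_{2}(F_{\ast }h_{1},JF_{\ast }h_{j})^{2}$, using the skew-symmetry of $J$ twice.
\item The term $-g_{2}(F_{\ast }h_{j},JF_{\ast }h_{j})\,g_{2}(JF_{\ast }h_{1},F_{\ast }h_{1})$ vanishes, because $g_{2}(X,JX)=0$.
\item The term $2g_{2}(F_{\ast }h_{1},JF_{\ast }h_{j})\,g_{2}(JF_{\ast }h_{j},F_{\ast }h_{1})$ equals $2g_{2}(JF_{\ast }h_{1},F_{\ast }h_{j})^{2}$.
\end{itemize}
The holomorphic contribution is therefore $\frac{3c}{4}\sum_{j=1}^{r}g_{2}(JF_{\ast }h_{1},F_{\ast }h_{j})^{2}$.

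\textbf{Identifying $\|PF_{\ast }h_{1}\|^{2}$.} Decompose $JF_{\ast }h_{1}=PF_{\ast }h_{1}+QF_{\ast }h_{1}$ as in (\ref{decompose_GCSF_RM}). In the Riemannian-map setting, $P$ must be read as the component in ${\rm range}F_{\ast }$ and $Q$ as the component in $({\rm range}F_{\ast })^{\perp }$; this is the analogue of the decomposition stated for $\ker F_{\ast }$ and its complement. The $Q$-part is orthogonal to every $F_{\ast }h_{j}$, so
\[
\sum_{j=1}^{r}g_{2}(JF_{\ast }h_{1},F_{\ast }h_{j})^{2}=\sum_{j=1}^{r}g_{2}(PF_{\ast }h_{1},F_{\ast }h_{j})^{2}=\Vert PF_{\ast }h_{1}\Vert ^{2}
\]
by Parseval's identity in the range space. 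Adding the two parts gives (\ref{eq-Ric-CSF-RM}).

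The main obstacle is bookkeeping. First, I must track the sign conventions of (\ref{eq-GCSF}) against the $g(R(X,Y)Y,X)$ form used in (\ref{Ricci_Curvatures}), so that the coefficient comes out exactly $3c/4$ with a plus sign. I would fix this by checking the case $r=2$ with $JF_{\ast }h_{1}=F_{\ast }h_{2}$, where the holomorphic sectional curvature must equal $c$. Second, I must justify reinterpreting $P$ in the target, since the definition in the paper was phrased for $\ker F_{\ast }$.
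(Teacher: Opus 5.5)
Your proposal is correct and follows the same route as the paper: substitute the curvature tensor (\ref{eq-GCSF}) of $N_{2}$ into (\ref{Ricci_Curvatures}) with respect to the orthonormal basis $\{F_{\ast }h_{j}\}$. Your reading of $PF_{\ast }h_{1}$ as the orthogonal projection of $JF_{\ast }h_{1}$ onto ${\rm range}\,F_{\ast }$ is the interpretation the paper's formula tacitly requires. The decomposition (\ref{decompose_GCSF_RM}), phrased via $\ker F_{\ast }$ and $(\ker F_{\ast })^{\perp }$, lives in $TN_{1}$ and cannot literally be applied to vectors in $TN_{2}$.
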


\begin{theorem}
Let $F:\left( N_{1},g_{1}\right) \rightarrow \left( N_{2},g_{2}\right) $ be
a Riemannian map a Riemannian manifold to real space form with $\dim
N_{1}=m_{1}$ and $\dim N_{2}=m_{2}$. Assume that $\left\{ h_{1},\ldots
,h_{r}\right\} $, $\left\{ F_{\ast }h_{1},\ldots ,F_{\ast }h_{r}\right\} $
and $\left\{ V_{r+1},\ldots ,V_{m_{2}}\right\} $ be bases of ${\cal H}_{p}$, 
$\left( {\rm range}F_{\ast }\right) $ and $\left( {\rm range}F_{\ast
}\right) ^{\perp }$, respectively. Then we have%
\begin{eqnarray}
{\rm Ric}^{{\cal H}}\left( h_{1}\right) &\geq &c\left( r-1\right) +\frac{r-1%
}{r^{2}}\left( 2\left\Vert {\rm trace}B^{{\cal H}}\right\Vert
^{2}-r\left\Vert B^{{\cal H}}\right\Vert ^{2}\right.  \nonumber \\
&&\left. -\left( r-2\right) \left\Vert {\rm trace}B^{{\cal H}}\right\Vert 
\sqrt{\frac{r\left\Vert B^{{\cal H}}\right\Vert ^{2}-\left\Vert {\rm trace}%
B^{{\cal H}}\right\Vert ^{2}}{r-1}}\right)  \label{eq-RSF-RM}
\end{eqnarray}
\end{theorem}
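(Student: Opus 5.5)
The plan is to obtain (\ref{eq-RSF-RM}) as a direct specialization of Theorem \ref{main_thm_CRI}, in the same way the submersion results of the previous section were deduced from the general inequalities. No new estimate is required. All the analytic work, namely the application of Lemma \ref{Hineva} to each normal component $(B_{ij}^{{\cal H}^{\alpha}})$ and the Cauchy--Schwarz step (\ref{eq-cauch-squartz-RM}), is already contained in (\ref{eq-GINQ-RM}). That inequality holds for an arbitrary target manifold $N_{2}$ and an arbitrary unit horizontal vector $h_{1}$.

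First, I fix orthonormal bases $\{h_{1},\ldots ,h_{r}\}$ of ${\cal H}_{p}$, $\{F_{\ast }h_{1},\ldots ,F_{\ast }h_{r}\}$ of ${\rm range}F_{\ast }$ and $\{V_{r+1},\ldots ,V_{m_{2}}\}$ of $({\rm range}F_{\ast })^{\perp }$. Orthonormality of the middle basis follows from the Riemannian map condition (\ref{eq-(1)}). Next, I compute ${\rm Ric}^{{\cal R}}(F_{\ast }h_{1})$ in the real space form $N_{2}(c)$. Substituting (\ref{eq-RSF}) into (\ref{Ricci_Curvatures}), each term with $j\neq 1$ contributes
\[
c\{g_{2}(F_{\ast }h_{j},F_{\ast }h_{j})g_{2}(F_{\ast }h_{1},F_{\ast }h_{1})-g_{2}(F_{\ast }h_{1},F_{\ast }h_{j})^{2}\}=c,
\]
and the $j=1$ term vanishes. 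This gives ${\rm Ric}^{{\cal R}}(F_{\ast }h_{1})=c(r-1)$, which is exactly the preceding proposition (\ref{eq-Ric-RSF-RM}).

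Finally, I insert (\ref{eq-Ric-RSF-RM}) into the right-hand side of (\ref{eq-GINQ-RM}); this yields (\ref{eq-RSF-RM}) at once. The equality case transfers verbatim from Theorem \ref{main_thm_CRI}. Equality holds if and only if every $(B_{ij}^{{\cal H}^{\alpha}})$ has the stated two-eigenvalue diagonal form with $h_{1}$ as the common distinguished eigendirection, and the ratio $|a_{\alpha}-b_{\alpha}|/(a_{\alpha}+(r-1)b_{\alpha})$ is independent of $\alpha$.

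There is no real obstacle; the only care needed is bookkeeping. First, one must verify that the sign convention in (\ref{eq-RSF}) makes $g_{2}(R^{N_{2}}(X,Y)Y,X)=c$ for orthonormal $X,Y$, so that the constant enters with a plus sign. Second, one must make sure that the index range in the normal sum of (\ref{eq-Ric-RM-1}) (written there as $\alpha=1,\ldots ,s$) is read as $\alpha=r+1,\ldots ,m_{2}$, consistent with the definition of $\Vert B^{{\cal H}}\Vert ^{2}$ and $\Vert {\rm trace}B^{{\cal H}}\Vert ^{2}$.
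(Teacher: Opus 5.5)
Your proposal is correct and is exactly the paper's argument: the paper proves this theorem by substituting ${\rm Ric}^{{\cal R}}(F_{\ast}h_{1})=c(r-1)$ from (\ref{eq-Ric-RSF-RM}) into the general inequality (\ref{eq-GINQ-RM}) of Theorem \ref{main_thm_CRI}. You additionally check $c(r-1)$ directly from (\ref{eq-RSF}) and make the bookkeeping remarks on the sign convention and the normal index range, which fill in details the paper leaves implicit.
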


\begin{proof}
In view of (\ref{eq-Ric-RSF-RM}) and (\ref{eq-GINQ-RM}%
), we get (\ref{eq-RSF-RM}).
\end{proof}

\begin{theorem}
Let $F:\left( N_{1},g_{1}\right) \rightarrow \left( N_{2},g_{2}\right) $ be
a Riemannian map a Riemannian manifold to complex space form with $\dim
N_{1}=m_{1}$ and $\dim N_{2}=m_{2}$. Assume that $\left\{ h_{1},\ldots
,h_{r}\right\} $, $\left\{ F_{\ast }h_{1},\ldots ,F_{\ast }h_{r}\right\} $
and $\left\{ V_{r+1},\ldots ,V_{m_{2}}\right\} $ be bases of ${\cal H}_{p}$, 
$\left( {\rm range}F_{\ast }\right) $ and $\left( {\rm range}F_{\ast
}\right) ^{\perp }$, respectively. Then we have%
\begin{eqnarray}
{\rm Ric}^{{\cal H}}\left( h_{1}\right) &\geq &\frac{c}{4}\left( r-1\right) +%
\frac{3c}{4}\left\Vert PF_{\ast }h_{1}\right\Vert ^{2}+\frac{r-1}{r^{2}}%
\left( 2\left\Vert {\rm trace}B^{{\cal H}}\right\Vert ^{2}-r\left\Vert B^{%
{\cal H}}\right\Vert ^{2}\right.  \nonumber \\
&&\left. -\left( r-2\right) \left\Vert {\rm trace}B^{{\cal H}}\right\Vert 
\sqrt{\frac{r\left\Vert B^{{\cal H}}\right\Vert ^{2}-\left\Vert {\rm trace}%
B^{{\cal H}}\right\Vert ^{2}}{r-1}}\right)  \label{eq-CSF-RM}
\end{eqnarray}
\end{theorem}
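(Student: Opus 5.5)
The inequality (\ref{eq-CSF-RM}) should follow from the general lower bound (\ref{eq-GINQ-RM}) of Theorem \ref{main_thm_CRI} by substituting the value of ${\rm Ric}^{{\cal R}}(F_{\ast }h_{1})$ for a complex space form target. The proof has the same structure as the real space form case (\ref{eq-RSF-RM}); the only new ingredient is the computation (\ref{eq-Ric-CSF-RM}).

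\textbf{Step 1: the range Ricci curvature.} Starting from (\ref{Ricci_Curvatures}), insert the curvature tensor (\ref{eq-GCSF}) of $N_{2}(c)$ with ${\cal Z}_{1}=F_{\ast }h_{1}$, ${\cal Z}_{2}={\cal Z}_{3}=F_{\ast }h_{j}$, and pair with $F_{\ast }h_{1}$. Orthonormality of $\{F_{\ast }h_{j}\}$ in ${\rm range}F_{\ast }$ makes the first bracket give $\frac{c}{4}(r-1)$ after summing over $j$. The $J$-terms reduce to $3\frac{c}{4}\,g_{2}(F_{\ast }h_{1},JF_{\ast }h_{j})^{2}$, since the middle term vanishes because $g_{2}(Z,JZ)=0$. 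Summing over $j=1,\ldots ,r$ gives $\frac{3c}{4}\sum_{j}g_{2}(JF_{\ast }h_{1},F_{\ast }h_{j})^{2}=\frac{3c}{4}\Vert PF_{\ast }h_{1}\Vert ^{2}$. Here $P$ denotes the tangential part of $J$ with respect to ${\rm range}F_{\ast }$, the analogue on $N_{2}$ of the decomposition (\ref{decompose_GCSF_RM}). This reproduces (\ref{eq-Ric-CSF-RM}).

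\textbf{Step 2: substitution.} Put (\ref{eq-Ric-CSF-RM}) into the right-hand side of (\ref{eq-GINQ-RM}). The second-fundamental-form terms $\Vert {\rm trace}B^{{\cal H}}\Vert $ and $\Vert B^{{\cal H}}\Vert $ are unchanged, so the result is exactly (\ref{eq-CSF-RM}). The hypothesis $r<m_{2}$ of Theorem \ref{main_thm_CRI} is implicit in the Riemannian map setting. The equality case carries over verbatim: equality holds if and only if the matrices $(B_{ij}^{{\cal H}^{\alpha }})$ have the two-eigenvalue form stated in Theorem \ref{main_thm_CRI}.

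\textbf{Main obstacle.} The only delicate point is Step 1. The symbol $P$ must be read correctly: in (\ref{decompose_GCSF_RM}) it was introduced relative to $\ker F_{\ast }$ on the source, but here it must mean the projection of $JF_{\ast }h_{1}$ onto ${\rm range}F_{\ast }\subset T_{F(p)}N_{2}$. The sign and coefficient bookkeeping of the three $J$-terms in (\ref{eq-GCSF}) must also produce the coefficient $3$. Once this is settled, the rest is immediate.
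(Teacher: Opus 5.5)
Your proposal is correct and follows the paper's proof: the paper likewise obtains (\ref{eq-CSF-RM}) just by substituting (\ref{eq-Ric-CSF-RM}) into (\ref{eq-GINQ-RM}). Your Step 1 adds the verification of (\ref{eq-Ric-CSF-RM}) that the paper states without proof, including the correct reading of $P$ as the component of $J$ tangent to ${\rm range}\,F_{\ast}$ rather than to $(\ker F_{\ast})^{\perp}$.
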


\begin{proof}
In view of (\ref{eq-Ric-CSF-RM}) and (\ref{eq-GINQ-RM}%
), we get (\ref{eq-CSF-RM}).
\end{proof}

\end{document}